\definecolor{RED}{HTML}{b22222} 
\newtheorem{theorem}{Theorem}[section]
\newtheorem{lemma}{Lemma}[section]
\newtheorem{assumption}{Assumption}[section]
\newtheorem{remark}{Remark}[section]
\newtheorem{definition}{Definition}[section]
\DeclareMathOperator*{\argmin}{argmin}
\newcommand{\eg}{\textit{e.g.}\ }
\newcommand{\ie}{\textit{i.e.}\ }
\newcommand{\nb}{\textit{n.b.}\ }
\newcommand{\limk}{\lim_{k\rightarrow\infty}}
\newcommand{\prox}[1]{\mathrm{prox}_{#1} }
\newcommand{\itemsymbol}{{\small $\blacktriangleright$}}
\newcommand{\sC}{{\cal C}}
\newcommand{\sN}{{\cal N}}
\newcommand{\sQ}{{\cal Q}} 
\newcommand{\sS}{{\cal S}} 
\newcommand{\sT}{{\cal T}}
\newcommand{\bbN}{{\mathbb N}} 
\newcommand{\bbR}{{\mathbb R}}
\newcommand{\bbE}{{\mathbb E}}
\newcommand{\bbP}{{\mathbb P}}
\newcommand{\rev}[1]{\textcolor{black}{#1}}
\title{Global Solutions to Nonconvex Problems \\ by Evolution of Hamilton-Jacobi PDEs}
\author{%
  \hspace*{-6pt}
  \\ 
  \begin{tabular}{ccc}
  \textbf{Howard Heaton} & \textbf{Samy Wu Fung} & \textbf{Stanley Osher}
  \\
  Typal Research & Dept. of Applied Mathematics and Statistics  & Dept. of Mathematics \\
  Typal LLC & Colorado School of Mines & UCLA
  \end{tabular} 
}
\begin{document}

\maketitle 

\begin{abstract}
    Computing tasks may often be posed as optimization problems.
    The objective functions for real-world scenarios are often nonconvex and/or nondifferentiable. State-of-the-art methods for solving these problems typically only guarantee convergence to local minima. 
    This work presents Hamilton-Jacobi-based Moreau Adaptive Descent (HJ-MAD), a zero-order algorithm with guaranteed convergence to global minima, assuming continuity of the objective function.
    The core idea is to compute gradients of the Moreau envelope of the objective (which is ``piece-wise convex'') with adaptive smoothing parameters. 
    Gradients of the Moreau envelope \rev{(\ie proximal operators)} are approximated \rev{via} the Hopf-Lax formula for the viscous Hamilton-Jacobi equation. 
    \rev{Our} numerical examples illustrate global convergence.
\end{abstract}

\section{Introduction} 

\setlength{\intextsep}{0pt}
\setlength{\columnsep}{12pt}
\begin{wrapfigure}{r}{0.5\textwidth}
    \centering
    \vspace*{-0.25in}
    \includegraphics{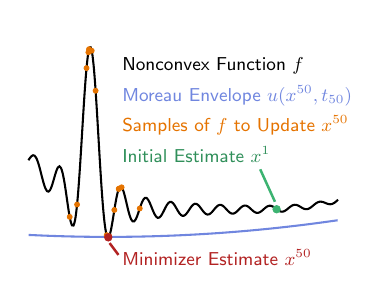}
    \vspace*{-0.2in}
    \caption{\rev{The iterates $\{x^k\}$ generated by HJ-MAD converge to a global minimizer of $f$ (black) via gradient descent on Moreau envelopes  (blue).}}
    \label{fig: eye-candy}
\end{wrapfigure} 

Standard data-oriented tasks such as neural network training, parameter estimation in physical models, and phase recovery may be cast as optimization problems. 
These problems are often highly non-convex, and practical schemes to solve for   global minimizers are scarce.
State-of-the-art methods are often either impractical~\cite{rastrigin1963convergence}, heuristic~\cite{olson2012basin,storn1997differential}, or only guarantee convergence to local minima~\cite{chaudhari2019entropy,kingma2015adam}.

In this work, we introduce a  zero-order algorithm with guaranteed convergence to global minima.
Our approach  minimizes Moreau envelopes of   objective functions (\eg see Figure~\ref{fig: eye-candy}). 
To compute gradients of the Moreau envelope, we leverage  a connection to Hamilton-Jacobi (HJ) partial differential equations (PDEs) via the Hopf-Lax formula. This yields an explicit solution  to the HJ equation from the Cole-Hopf formula, giving the name Hamilton-Jacobi-based Moreau Adaptive Descent (HJ-MAD). These gradients take the form of   expectation formulas that can be estimated via sampling. 
 
\paragraph{Contribution}  
Our key contributions for \textit{global} minimization of nonconvex functions are as follows. 
\begin{itemize}
    \item[\itemsymbol] Present   Moreau Adaptive Descent (MAD), a zero-order method for minimization.
    \item[\itemsymbol] Prove function value convergence by MAD to the  \textit{global} minimum value.
    \item[\itemsymbol] Connect MAD to an inviscid Burgers' HJ equation, with adaptive time steps.
    \item[\itemsymbol] \rev{Efficiently approximate Moreau envelopes and proximals using viscous  HJ equations.}
\end{itemize}

\section{Moreau Adaptive Descent}
 \begin{algorithm}[t]
\caption{ Moreau Adaptive Descent (MAD)}
\label{alg: MAD}
\begin{algorithmic}[1]           
    \STATE{\begin{tabular}{p{0.50\textwidth}r}
     \hspace*{-8pt}  MAD$(x^1, \alpha,  \tau, t_1, T):$
     & 
     $\vartriangleleft$ Input parameters 
     \end{tabular}}    

    \STATE{\begin{tabular}{p{0.50\textwidth}r}
     \hspace*{2pt} {\bf for} $k=1,2,\ldots $
     & 
     $\vartriangleleft$ Loop until convergence
     \end{tabular}}  

    \STATE{\begin{tabular}{p{0.50\textwidth}r}
     \hspace*{12pt}   $\hat{x}^k \in \prox{t_kf}(x^k)$
     & 
     $\vartriangleleft$ Compute local minimizer
     \end{tabular}}     

    \STATE{\begin{tabular}{p{0.50\textwidth}r}
     \hspace*{12pt}   $g^k = t_k^{-1}(x^k-\hat{x}^k)$
     & 
     $\vartriangleleft$ Estimate envelope gradient
     \end{tabular}}     
     
    \STATE{\begin{tabular}{p{0.50\textwidth}r}
     \hspace*{12pt} $x^{k+1} = x^k -  \alpha_k t_k g^k  $
     & 
     $\vartriangleleft$ Gradient update
     \end{tabular}}      

    \STATE{\begin{tabular}{p{0.50\textwidth}r}
     \hspace{12pt} $t_{k+1} = \mbox{TimeStep}(t_k, g^k , g^{k-1};\     \tau, T)$
     & 
     $\vartriangleleft$ Evolve time via (\ref{eq: time-step})
     \end{tabular}}

    \STATE{\begin{tabular}{p{0.50\textwidth}r}
     \hspace*{2pt} {\bf return}  $x^k$
     & 
     $\vartriangleleft$ Output solution estimate 
     \end{tabular}}    
\end{algorithmic} 
\end{algorithm}   
For a continuous and bounded from below function $f\colon\bbR^n\rightarrow \bbR$,  consider the minimization problem

\vspace*{-12pt}
\begin{equation}
    \min_{x\in \bbR^n} f(x).
    \label{eq: constrained-minimization-problem}
\end{equation}
Given a time   $t\in(0,\infty)$,  the proximal $\prox{tf}$ and Moreau envelope $u(\cdot,t)$ \cite{moreau1962decomposition,bauschke2017convex} are defined by 
\vspace*{-5pt}
\begin{equation}
    \prox{tf}(x) \triangleq \argmin_{z\in\bbR^n} f(z) + \dfrac{1}{2t}\|z-x\|^2
    \quad\mbox{and}\quad
    u(x,t) \triangleq \min_{z\in \bbR^n} f(z) + \dfrac{1}{2t}\|z-x\|^2.
    \label{eq: moreau-envelope-definition}
\end{equation}
These quantities are closely tied as the proximal   is the set of minimizers defining the envelope.
As shown in Figure \ref{fig: eye-candy}, the envelope $u$ widens valleys of $f$ and its local minimizers align with local minimizers of $f$ (see Lemma \ref{lemma: share-critical-points}).
Under certain assumptions, increasing the time   $t$ dissipates the envelope enough that all local minimizers of $u$ are global minimizers of $f$ ({see Lemma \ref{lemma: local-u-global-f}}). Leveraging this fact, we generate a sequence $\{x^k\}$   via   gradient descent on the envelope $u(\cdot, t_k)$ while evolving time $t_k$ (forward and backward). 
For practical purposes (discussed in Section \ref{sec: time-evolution}),   time steps $t_k$ are kept  relatively  small to ensure updates leverage the local landscape of $f$ when possible.
 Algorithm \ref{alg: MAD} presents Moreau Adaptive Descent (MAD) (\nb time  stepping  is defined below in (\ref{eq: time-step})). 

Typical results  for zero-order methods assume $f$ is smooth; yet, MAD does not require such regularity. 
\begin{assumption} \label{ass: f}
    The function $f:\bbR^n\rightarrow\bbR$ is continuous.
\end{assumption}

The standard first order necessary condition for optimality is   $\nabla f = 0$. Weaker versions of this are used when $f$ is merely continuous and nonconvex. We utilize the following definition~\cite[Section 2]{davis2018subgradient}, which generalizes both gradients and the notion of subdifferential common in convex settings.\footnote{The usual notion of subdifferential   removes  $o(\|x-\overline{x}\|)$, instead requiring the inequality to hold for all $x,\overline{x}$.}

\begin{definition} \label{def: subdifferential}
    For a function $f\colon\bbR^n\rightarrow\bbR$, the \textit{subdifferential} of $f$ at $\overline{x}$, denoted by $\partial f(\overline{x})$, is the set of all $v\in\bbR^n$ satisfying
    \begin{equation}
        f(x) \geq f(\overline{x}) + \left< v,x-\overline{x}\right> + o(\|x-\overline{x}\|),
        \ \ \ \mbox{as}\ x\rightarrow\overline{x}.
    \end{equation}
\end{definition}

We next assume the set of global minima is compact and distinguishable from other extrema.

\begin{assumption} \label{ass: gamma-minimizers}
    There is $\gamma > 0$ such that 
    
     \vspace*{-7pt}
    \begin{enumerate}[label =\roman*)]
    \item  the set
    $ \sS_\gamma \triangleq \{ z \in \bbR^n : f(z) \leq  \inf f +  \gamma \}$ is compact;
    
    \item if $0 \in \partial f(x)$ and  $x\in \sS_\gamma$, then $x$ is a global minimizer of $f$.
    \end{enumerate}
\end{assumption}

Lastly, we provide restrictions on the step size $\alpha$   and initial time $t_1$, assuming $\eta_- \in (0,1)$.

\begin{assumption} \label{ass: alg-params}
    The MAD parameters satisfy:
    
    \vspace*{-7pt}
    \begin{enumerate}[label =\roman*)]
        \item step size satisfies $\alpha \in (1-\sqrt{\eta_-}, 1+\sqrt{\eta_-})$;
    
        \item  
        For a global minimizer $x^\star$ of $f$, times $t_1, \tau, T\in (0,\infty)$    are chosen such that $t_1\geq \tau$ and $T\geq t_1 \geq \|x^\star -x^1\|^2/2\gamma$, where $\gamma$ is as in Assumption \ref{ass: gamma-minimizers}.
    \end{enumerate}

\end{assumption}
 
Combining our assumptions leads to our main result.
 
\begin{theorem} \label{thm: nonconvex-convergence}
    {(\sc Global Minimization)} If Assumptions \ref{ass: f}, \ref{ass: gamma-minimizers}, and \ref{ass: alg-params} hold,
    then the iteration in Lines \rev{3 to 6} of Algorithm \ref{alg: MAD} yields convergence to optimal objective values, \ie 
    \begin{equation}
        \lim_{k\rightarrow\infty} f(x^k) = \min_{x\in\bbR^n} f(x).
    \end{equation}
    Additionally, a subsequence $\{x^{n_k}\}\subseteq \{x^k\}$ converges to a  {global} minimizer of $f$.
\end{theorem}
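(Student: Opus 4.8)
The plan is to run a Lyapunov argument with the Moreau-envelope value $V_k \triangleq u(x^k, t_k)$: show it decreases to $\inf f$, and then harvest a limit point of the iterates that Assumption~\ref{ass: gamma-minimizers} forces to be a global minimizer. First I would record two consequences of the definition of the prox. Since $\hat x^k \in \prox{t_kf}(x^k)$ we have $u(x^k,t_k) = f(\hat x^k) + \tfrac{t_k}{2}\|g^k\|^2$ and $u(x^k,t_k) \le f(x^\star) + \tfrac{1}{2t_k}\|x^\star - x^k\|^2 = \inf f + \tfrac{1}{2t_k}\|x^\star - x^k\|^2$, and by Assumption~\ref{ass: alg-params}(ii) the latter is $\le \inf f + \gamma$ at $k=1$. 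Moreover, using $x^{k+1} = x^k - \alpha_k t_k g^k$ and $\hat x^k - x^{k+1} = (\alpha_k - 1)t_k g^k$, testing the minimand of $u(x^{k+1},t_k)$ at $z = \hat x^k$ yields the envelope descent inequality
\[
 u(x^{k+1},t_k) \;\le\; f(\hat x^k) + \tfrac{(\alpha_k-1)^2 t_k}{2}\|g^k\|^2 \;=\; u(x^k,t_k) - \tfrac{t_k}{2}\bigl(1-(\alpha_k-1)^2\bigr)\|g^k\|^2 ,
\]
which by Assumption~\ref{ass: alg-params}(i) (so that $(\alpha_k-1)^2 < \eta_- < 1$) is a genuine decrease whenever $g^k \neq 0$. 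Combining this with the monotonicity $t' \ge t \Rightarrow u(x,t') \le u(x,t)$ and with the time-step rule~(\ref{eq: time-step}) — whose role I would show is exactly to cap any \emph{decrease} of $t_k$ so that the attendant increase of $u$ consumes only a strict fraction of the decrease just gained — I would prove by induction that $V_k$ is nonincreasing, hence $\inf f \le V_k \le \inf f + \gamma$ for every $k$. This induction, and in particular the interplay between $\eta_-$ and the precise form of~(\ref{eq: time-step}) under the ``forward and backward'' time evolution, is the step I expect to be the main obstacle.

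Granting $V_k \le \inf f + \gamma$: then $f(\hat x^k) \le V_k$ puts $\hat x^k$ in the compact set $\sS_\gamma$, and $\|x^k-\hat x^k\|^2 = t_k^2\|g^k\|^2 = 2t_k\bigl(V_k - f(\hat x^k)\bigr) \le 2\gamma T$, so both $\{x^k\}$ and $\{\hat x^k\}$ are bounded. Since $V_k$ is nonincreasing and bounded below it converges, and by the fraction estimate above the per-step decrease dominates a positive multiple of $t_k\|g^k\|^2$ with $t_k \ge \tau$; hence $\sum_k \|g^k\|^2 < \infty$, so there is a subsequence with $g^{n_k} \to 0$. Passing to a further subsequence, $\hat x^{n_k} \to \overline x \in \sS_\gamma$ (compactness, and closedness of $\sS_\gamma$ by continuity of $f$), and then $x^{n_k} \to \overline x$ as well since $\|x^{n_k} - \hat x^{n_k}\| = t_{n_k}\|g^{n_k}\| \le T\|g^{n_k}\| \to 0$.

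Finally I would identify $\overline x$ as stationary. From optimality of $\hat x^{n_k}$ one has, for every $z$, $f(z) \ge f(\hat x^{n_k}) + \langle g^{n_k}, z-\hat x^{n_k}\rangle - \tfrac{1}{2t_{n_k}}\|z-\hat x^{n_k}\|^2 \ge f(\hat x^{n_k}) + \langle g^{n_k}, z-\hat x^{n_k}\rangle - \tfrac{1}{2\tau}\|z-\hat x^{n_k}\|^2$; letting $k\to\infty$ and using continuity of $f$, $g^{n_k}\to 0$, $\hat x^{n_k}\to\overline x$ gives $f(z) \ge f(\overline x) - \tfrac{1}{2\tau}\|z-\overline x\|^2$ for all $z$, and since $-\tfrac{1}{2\tau}\|z-\overline x\|^2 = o(\|z-\overline x\|)$ as $z\to\overline x$, Definition~\ref{def: subdifferential} yields $0 \in \partial f(\overline x)$. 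As $\overline x \in \sS_\gamma$, Assumption~\ref{ass: gamma-minimizers}(ii) makes $\overline x$ a global minimizer, which proves the subsequential claim. Then $V_{n_k} = f(\hat x^{n_k}) + \tfrac{t_{n_k}}{2}\|g^{n_k}\|^2 \to f(\overline x) = \inf f$, so the convergent sequence $V_k$ has $\lim_k V_k = \inf f$; hence $\inf f \le f(\hat x^k) \le V_k \to \inf f$ gives $f(\hat x^k)\to\inf f$, whence $\|x^k-\hat x^k\|^2 = 2t_k\bigl(V_k - f(\hat x^k)\bigr) \to 0$, and uniform continuity of $f$ on a compact set containing $\{x^k\}\cup\{\hat x^k\}$ upgrades $f(\hat x^k)\to\inf f$ to $f(x^k)\to\inf f$.
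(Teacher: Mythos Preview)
Your Lyapunov strategy with $V_k = u(x^k,t_k)$ is exactly the paper's approach, and your downstream steps (boundedness, summability, extraction of a limit point, $0\in\partial f$, then upgrading to $f(x^k)\to\inf f$ via uniform continuity) are all correct. The ``main obstacle'' you flag is easier than you make it sound: rather than passing through $u(x^{k+1},t_k)$ and then worrying about the increase when $t$ drops, test $z=\hat x^k$ \emph{directly} in the definition of $u(x^{k+1},t_{k+1})$. Since $\hat x^k - x^{k+1} = (\alpha-1)(x^k-\hat x^k)$,
\[
u(x^{k+1},t_{k+1}) \;\le\; f(\hat x^k) + \tfrac{(1-\alpha)^2}{2t_{k+1}}\|x^k-\hat x^k\|^2
\;=\; u(x^k,t_k) + \tfrac{1}{2}\Bigl(\tfrac{(1-\alpha)^2}{t_{k+1}} - \tfrac{1}{t_k}\Bigr)\|x^k-\hat x^k\|^2.
\]
The only property of the time-step rule~(\ref{eq: time-step}) you need here is that \emph{both} branches give $t_{k+1}\ge \eta_- t_k$, so $t_k/t_{k+1}\le 1/\eta_-$ and, by Assumption~\ref{ass: alg-params}(i), the bracketed coefficient is at most $\tfrac{1}{t_k}\bigl((1-\alpha)^2/\eta_- - 1\bigr) \le \tfrac{1}{T}\bigl((1-\alpha)^2/\eta_- - 1\bigr) < 0$. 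This yields your desired per-step decrease $\ge c\,t_k\|g^k\|^2$ with $c = \tfrac12\bigl(1-(1-\alpha)^2/\eta_-\bigr)>0$; no separate induction handling ``forward'' and ``backward'' cases is needed. (Minor point: $\sum_k\|g^k\|^2<\infty$ already gives $g^k\to0$ along the \emph{whole} sequence, not just a subsequence.)

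One genuine difference from the paper: to get $0\in\partial f(\overline x)$ you take a limit in the prox inequality, using only $t_k\ge\tau$. The paper instead first shows that the time-step rule (via the $\delta$ slack in~(\ref{eq: time-step})) forces $t_k=T$ eventually, then proves $u(x^\infty,T)=f(x^\infty)$ and invokes Lemma~\ref{lemma: u=f}. Your route is shorter and decouples the conclusion from the specific form of the time update; the paper's route exploits more of the algorithmic structure but is not needed for the theorem as stated.
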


\begin{algorithm}[t]
\caption{ Hamilton-Jacobi Moreau Adaptive Descent (HJ-MAD)    }
\label{alg: HJ-MAD}
\begin{algorithmic}[1]           
    \STATE{\begin{tabular}{p{0.50\textwidth}r}
     \hspace*{-8pt}  HJ-MAD$(x^1, \alpha,t_1,\tau, T):$
     & 
     $\vartriangleleft$ Input parameters 
     \end{tabular}}    \\[2pt] 
    

    \STATE{\begin{tabular}{p{0.50\textwidth}r}
     \hspace*{2pt} {\bf for} $k=1,2,\ldots$
     & 
     $\vartriangleleft$ Loop until convergence
     \end{tabular}}  \\[2pt] 
     
    \STATE{\begin{tabular}{p{0.50\textwidth}r}
     \hspace*{12pt} $g^{k} \leftarrow  \nabla u^{\delta}(x^k,t_k)$
     & 
     $\vartriangleleft$ Generate gradient estimate via (\ref{eq: u-viscous-gradient-expectation})
     \end{tabular}}  \\[2pt]        
 
    \STATE{\begin{tabular}{p{0.50\textwidth}r}
     \hspace*{12pt} $x^{k+1} \leftarrow  x^k - \alpha t_k g^k  $
     & 
     $\vartriangleleft$ Gradient update
     \end{tabular}}     \\[2pt] 

    \STATE{\begin{tabular}{p{0.50\textwidth}r}
     \hspace{12pt} $t_{k+1} \leftarrow \mbox{TimeStep}(t_k,  g^k,  g^{k-1};\   \tau, T)$
     & 
     $\vartriangleleft$ Evolve time via (\ref{eq: time-step})
     \end{tabular}}       \\[2pt]

    \STATE{\begin{tabular}{p{0.50\textwidth}r}
     \hspace*{2pt} {\bf return}  $x^k$
     & 
     $\vartriangleleft$ Output solution estimate 
     \end{tabular}}   
\end{algorithmic}
\end{algorithm} 

\section{Connections to Hamilton-Jacobi PDEs}
When the envelope $u$ is differentiable at $x^k$, its gradient is precisely $g^k$  \rev{(see Lemma \ref{lemma: u-local-minimizer})}, \ie
\begin{align}
    g^k = \dfrac{x^k-\hat{x}^k}{t_k} = \nabla u(x^k, t_k),
    \ \  \ \mbox{where}\ \  \ 
    \hat{x}^k  = \prox{t_k f}(x^k).
\end{align}
Although the idea to use gradients of $u(x^k,t_k)$ is simple, computing $\hat{x}^k$ and $g^k$ can be   as difficult as the original  problem (\ref{eq: constrained-minimization-problem}).  
This difficulty can be (approximately) circumvented by using a  PDE formulation.
The envelope $u$ is a special case of the Hopf-Lax formula \cite{evans2010partial} for PDEs. It can be shown (\eg see \cite[Theorem 3.2]{evans2014envelopes}) that $u$ is a viscous solution to Burgers' Hamilton-Jacobi equation\footnote{We call (\ref{eq: HJ}) \textit{Burgers'}   since, if $ \lim\limits_{|x|\rightarrow\infty}|u|= 0$, the original Burgers' PDE is obtained via integration by parts.}
\begin{equation}
    \left \lbrace\begin{array}{rll}
    u_t  + \dfrac{1}{2}\|Du \|^2 \hspace*{-7pt}& = 0  & \mbox{in\  $\bbR^n\times (0,T]$}\\
    u\hspace*{-7pt} & = f & \mbox{on\  $\bbR^n\times\{t=0\}$}.
    \end{array}\right.
    \label{eq: HJ}
\end{equation} 
The key step in obtaining an explicit expression for each subgradient is to approximate the solution $u$ to (\ref{eq: HJ}) by adding a small amount of viscosity via a Laplacian term. Namely, fixing $\delta > 0$, we approximate solutions to (\ref{eq: HJ}) via the solution $u^\delta$ of the associated viscous Burgers'  equation
\begin{equation}
     \left\lbrace \begin{array}{rll}
    u_t^\delta  + \frac{1}{2}\|Du^\delta  \|^2 \hspace*{-7pt}& = \frac{\delta}{2} \Delta u^\delta   & \mbox{in\  $\bbR^n\times (0,T]$}\\[5pt]
    u^\delta \hspace*{-7pt} & = f  & \mbox{on\  $\bbR^n\times\{t=0\}$}.
    \end{array}  \right. 
    \label{eq: HJ-viscous}
\end{equation}
A key result justifying this approximation is that of Crandall and Lions  \cite[Theorem 5.1]{crandall1984two}.
\begin{theorem}  
    If $\delta, T \in (0,\infty)$ and $f$ is bounded and Lipschitz, then   there is $C\in(0,\infty)$ such that
    \begin{equation}
        \sup_{t\in[0,T]}\sup_{x\in \bbR^n}\left|  u(x,t) -  u^\delta(x,t) \right|  \leq C \sqrt{\delta}.
    \end{equation}
\end{theorem}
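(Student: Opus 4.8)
The plan is to sidestep PDE comparison principles and instead obtain an explicit formula for $u^\delta$ via the Cole--Hopf transformation, reducing the claim to a quantitative version of ``log-sum-exp approximates the minimum.'' First I would set $v^\delta = \exp(-u^\delta/\delta)$ and check that it solves the linear heat equation $v^\delta_t = \tfrac{\delta}{2}\Delta v^\delta$ with initial datum $\exp(-f/\delta)$; since $f$ is bounded this datum is bounded, so by uniqueness of bounded solutions the heat semigroup gives
\[
  u^\delta(x,t) = -\delta\log\!\left(\int_{\bbR^n}(2\pi\delta t)^{-n/2}\exp\!\left(-\frac{1}{\delta}\Big[f(y)+\frac{\|x-y\|^2}{2t}\Big]\right)dy\right),
\]
whereas the Hopf--Lax / Moreau formula reads $u(x,t) = \min_{y\in\bbR^n} g_{x,t}(y)$ with $g_{x,t}(y) = f(y)+\|x-y\|^2/(2t)$, the minimum being attained at some $y^\star$ because $f$ is bounded below and $g_{x,t}$ is coercive. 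Two preliminary facts I would record are the a priori localization $\|x-y^\star\| \le 2Lt$ (from $\|x-y^\star\|^2/(2t) = u(x,t)-f(y^\star) \le f(x)-f(y^\star) \le L\|x-y^\star\|$) and the crude sandwich $\inf f \le u^\delta(x,t) \le \sup f$ (from $\exp(-\sup f/\delta) \le v^\delta \le \exp(-\inf f/\delta)$), the latter disposing of the regime $\delta \ge 1$ at once.

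For the upper estimate $u^\delta \le u + C\sqrt\delta$, I would recenter the Gaussian weight at $y^\star$, write $Y = y^\star + \sqrt{\delta t}\,Z$ with $Z$ standard normal, and apply Jensen's inequality to the convex function $-\log$: after the cross term $\langle Z, x-y^\star\rangle$ is killed by $\bbE Z = 0$, this yields $u^\delta(x,t) \le \|x-y^\star\|^2/(2t) + \bbE\,f(y^\star+\sqrt{\delta t}\,Z) \le u(x,t) + L\sqrt{\delta t}\,\bbE\|Z\| \le u(x,t) + L\sqrt{nT}\,\sqrt\delta$. This bound is clean and, importantly, uniform all the way to $t=0$.

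For the lower estimate $u^\delta \ge u - C\sqrt\delta$ I would split the defining integral at the scale-invariant radius $12Lt$ around $y^\star$. Everywhere $g_{x,t} \ge u$, and expanding $\|x-y\|^2$ and absorbing the Lipschitz term with the bound $\|x-y^\star\|\le 2Lt$ gives $g_{x,t}(y) \ge u + \|y-y^\star\|^2/(4t)$ once $\|y-y^\star\| \ge 12Lt$. Thus the inner ball contributes at most $e^{-u/\delta}\,\omega_n(12Lt)^n(2\pi\delta t)^{-n/2}$ and the complement at most $e^{-u/\delta}\,2^{n/2}$, whence $u^\delta \ge u - \delta\log\!\big(A\,\delta^{-n/2} + 2^{n/2}\big)$ for a constant $A = A(n,L,T)$. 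Since $\delta\log^+(1/\delta) \le \sqrt\delta$, this is $u^\delta \ge u - C_1\sqrt\delta$ for $\delta \le 1$; combined with the crude sandwich for $\delta \ge 1$ and the trivial case $t=0$, taking $C$ as the largest constant produced yields $\sup_{t,x}|u-u^\delta| \le C\sqrt\delta$.

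The main obstacle is uniformity in $t\in[0,T]$ rather than the estimates at a fixed $t$: near $t=0$ both the Gaussian mass and the natural localization radius collapse, so a carelessly chosen cutoff produces terms such as $\delta/t$ or $\delta\log(\delta t)$ that blow up as $t\downarrow 0$, and this is also where the exponent $\tfrac12$ (rather than the $1$ Laplace's method suggests at an interior minimum) is forced. Recentering at $y^\star$ together with Jensen for the upper bound, and the $t$-homogeneous cutoff $12Lt$ together with the harmless $\delta^{-n/2}$ prefactor for the lower bound, are designed precisely so that every appearance of $t$ is either dominated by $T$ or cancels, leaving only $\delta\log(1/\delta) = o(\sqrt\delta)$. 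As an alternative route one could forgo the explicit formula and invoke the general vanishing-viscosity rate for Hamilton--Jacobi equations proved by a doubling-of-variables argument, which is what is cited here from \cite{crandall1984two}.
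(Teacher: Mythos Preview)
Your argument is correct. The recentered Jensen computation for the upper bound and the scale-invariant splitting at radius $12Lt$ for the lower bound both go through as written; in particular the crucial uniformity in $t\in(0,T]$ holds because the inner-ball contribution is $O((t/\delta)^{n/2})\le O((T/\delta)^{n/2})$ and the tail contribution is the $t$-free constant $2^{n/2}$, so nothing blows up as $t\downarrow 0$. The localization $\|x-y^\star\|\le 2Lt$, the bound $\delta\log(1/\delta)\le (2/e)\sqrt\delta$, and the crude sandwich for $\delta\ge 1$ are all valid.

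The paper, however, does not prove this statement at all: it simply quotes it as \cite[Theorem~5.1]{crandall1984two}. The Crandall--Lions proof proceeds by the viscosity-solution machinery (doubling of variables and comparison principles) and applies to general convex Hamiltonians $H(Du)$, not just $H(p)=\tfrac12\|p\|^2$. Your route exploits the special structure of the quadratic Hamiltonian through the Cole--Hopf linearization to obtain an explicit integral formula, turning the estimate into a Laplace-type bound; this is more elementary and yields explicit constants, but does not generalize beyond Hamiltonians for which such a linearizing substitution exists. You already note this trade-off in your final sentence, so the comparison is exactly as you describe.
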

This establishes uniform convergence $u^\delta \rightarrow u$ as $\delta \rightarrow 0^+$. 
Although $f$ is not necessarily bounded and Lipschitz continuous on $\bbR^n$, the sequence $\{x^k\}$ generated by Algorithm~\ref{alg: MAD} is bounded (see Lemma~\ref{lemma: xk-bounded}).
Thus,  the above result still applies since $f$ is bounded and Lipschitz continuous on a compact domain   containing $\{x^k\}$.
Consequently, for $\delta > 0$ sufficiently small, one is, for all practical purposes, justified in using  Algorithm \ref{alg: HJ-MAD} to estimate solutions to (\ref{eq: constrained-minimization-problem}).

Using the  transformation  $v^\delta \triangleq \exp(-u^\delta/\delta)$, originally attributed to Cole and Hopf~\cite{evans2010partial,chaudhari2018deep}, it follows that $v^\delta$ solves the heat equation, \ie 
\begin{equation}
    \left\lbrace \begin{array}{rll}
    v^\delta _t  -   \rev{\frac{\delta}{2}} \Delta v^\delta \hspace*{-7pt}&= 0  & \mbox{in\  $\bbR^n\times (0,T]$}\\
    v^\delta \hspace*{-7pt} & = \exp(-f/\delta)  & \mbox{on\  $\bbR^n\times\{t=0\}$}.
    \end{array}\right.
    \label{eq: heat-PDE}    
\end{equation}
This transformation to $v^\delta$ is of particular interest since $v^\delta$ can be expressed via the convolution formula (\eg see \cite{evans2010partial} for a derivation)
\begin{equation}
    v^\delta(x,t) =\Big( \Phi_{\rev{\delta} t} * \exp(-f/\delta)\Big)(x),
\end{equation}
where $\Phi_{\rev{\delta t}}$ is the fundamental solution to the heat equation in~\eqref{eq: heat-PDE}, \ie 
\begin{equation}
    \Phi_{\rev{\delta} t}(x) \triangleq \left\lbrace \begin{array}{cl} (\rev{2}\pi \rev{\delta} t)^{-n/2} \cdot \exp\Large(-\frac{|x|^2}{\rev{2} \rev{\delta} t}\Large) & \mbox{in $\bbR^n\times (0,\infty)$}\\
    0 & \mbox{otherwise.}\end{array}\right.
\end{equation}
Thus, using algebraic manipulations, we   recover the viscous Burgers' solution
\begin{equation}
    u^\delta(x,t) = - \delta \ln\Big(\Phi_{\rev{\delta} t} * \exp(-f/\delta)\Big)(x)
    \ \ \mbox{in\  $\bbR^n\times (0,T]$}.
    \label{eq: u_delta_def}
\end{equation}
\rev{See Appendix~\ref{app: taking_delta_to_zero} for an intuitive, informal argument that $u^\delta$ expressed by  \eqref{eq: u_delta_def} converges pointwise to the envelope $u$ as $\delta \to 0$}. Moreover, for $\delta$ sufficiently small, \rev{if $u$ is diffierentiable at $(x,t)$, then}
\begin{equation}
    \nabla u(x,t)
    \approx \nabla u^\delta(x,t)
    = -\delta \cdot \nabla\left[   \ln\left(v^\delta(x,t)\right)\right] 
     = -\delta \cdot \dfrac{\nabla v^\delta(x,t)}{v^\delta(x,t)}.
     \label{eq: u-viscous-gradient}
\end{equation}
Loosely speaking, (\ref{eq: u-viscous-gradient}) shows we can approximate $g^k$ in Algorithm \ref{alg: MAD} using convolutions with the heat kernel $\Phi_{\rev{\delta}t}$. 
A more practical formula for the convolutions in (\ref{eq: u-viscous-gradient}) is given in Section \ref{sec: gradient-estimation}.

\section{Time Evolution} \label{sec: time-evolution}
To prevent the sequence $\{x^k\}$ from converging to   local minima that are not globally optimal, the HJ equation is evolved in time.  The time stepping method we propose is similar in spirit to trust region methods. 
If the gradient $\nabla u$ is small, then time is increased to get out of non-global local minima. If the gradient is relatively large, then time  is reduced to utilize the local landscape. In mathematical terms, 
for $0 < \eta_- < 1 < \eta_+$ and $\rev{0<\theta_1\leq \theta_2< 1}$, \rev{and $\varepsilon > 0$}, 
the time stepping rule   is defined by
\begin{equation}
    \mbox{TimeStep}(t,  p, q;   \tau,T) \triangleq 
    \begin{cases}
    \begin{array}{cl}
        \min\big( \eta_+ t, T\big) & \mbox{if}\ \|p\| \leq \theta_1 \|q\| + \rev{ \varepsilon}  \\[7pt] 
        \rev{t} & \rev{\mbox{else if}\ \|p\| \leq \theta_2 \|q\| +   \varepsilon}  \\[7pt]
        \max\big( \eta_- t,\tau\big) & \mbox{otherwise.}
    \end{array}
    \end{cases}
    \label{eq: time-step}
\end{equation} 
Algorithms \ref{alg: MAD} and \ref{alg: HJ-MAD} evolve time   using the above time stepping rule with $t=t_k$, $p = g^k$ and $q = g^{k-1}$.
\begin{remark}
    Other updates can be used to generate $\{t_k\}$. We restrict our presentation to that above for simplicity. Future work may investigate   convergence   improvements with other time step rules.
\end{remark}

\section{Gradient Estimation} \label{sec: gradient-estimation}
\paragraph{Expectation Formulation} 
An essential property of any optimization algorithm is  it scales well with dimension $n$. 
At first glance, the expression for $\nabla u^\delta$ in (\ref{eq: u-viscous-gradient}) consists of two convolutions, which require evaluating $f$ over all of $\bbR^n$. 
This is an intractable task; however, the heat kernel coincides with the probability density of a Gaussian distribution with mean $x$ and standard deviation $\sqrt{\rev{\delta} t}$. 
This enables  the convolutions to be written as expectations, which can be approximated via sampling (\eg see \cite{hastie2009elements} for an overview of sampling methods). 
Namely, setting $\bbP_{x,t} \triangleq \sN(x,  \rev{\delta} t)$ yields\footnote{\textcolor{black}{For completeness, we include a brief derivation of this connection in Appendix~\ref{app: gradient_derivation}.}}
\begin{equation}
    \nabla u^\delta(x,t) = \rev{\dfrac{1}{t}\cdot }\left(\rev{x} -  \dfrac{\bbE_{y\sim  \bbP_{x,\rev{\delta} t}}\left[\rev{y\cdot } \exp\left(-\delta^{-1}{f}(y)\right) \right]}
    {\bbE_{y\sim  \bbP_{x,\rev{\delta} t}}\left[ \exp\left(-\delta^{-1} {f}(y)\right) \right]}\right).
    \label{eq: u-viscous-gradient-expectation}
\end{equation}
\rev{When $t$ is small, a few samples are needed to adequately estimate the expectation in (\ref{eq: u-viscous-gradient-expectation}) since the heat kernel is concentrated for small time.}
When $t$ is large, many samples are \rev{needed}. However, it is worth noting no grids are required when solving the HJ equation~\cite{osher1988fronts,osher1991high,liu1994weighted}.

\paragraph{Exponentially Weighted Moving Average} 
To reduce the number of samples needed to adequately estimate gradients, especially when $n$ and/or $t$ are large,  moving averages of gradients can be used. 
This allows  samples from previous steps to estimate the gradient at the current step.
This is similar to ADAM's~\cite{kingma2015adam} variance reduction, which estimates   first moments via a moving average of gradients.

\paragraph{Limitations} 
Some limitations of HJ-MAD   require careful consideration.
First, since HJ-MAD relies on smoothing the objective function, it is expected to not perform as well on functions that are very flat and smooth.  Second, while the method is guaranteed to converge to the global minimizer, it does not necessarily outperform some of the state-of-the-art methods.
We simply present a method with theoretical guarantees, whereas several current methods are mostly heuristic.
Finally, as in most algorithms, the choice of parameters (in this case, time update parameters) is problem-dependent. 

\section{Related Works}
\label{sec: related-works}
    \paragraph{Global Optimization Algorithms} Random Search Methods~\cite{rastrigin1963convergence} are   derivative-free methods that iteratively move to better positions in the search-space.
    Pure Random Search (PRS)~\cite{brooks1958discussion} samples points from the domain independently and set the point with lowest function value to be the next iterate.
    Differential Evolution (DE) ~\cite{storn1997differential} uses a population/batch of candidate solutions that are moved around using a simple formula. If the new position of an agent is an improvement then it is accepted and forms part of the population, otherwise the new position is simply discarded. The method is iterative and heuristic. It is similar to PRS and requires many function evaluations. Moreover, there are no convergence guarantees. Basin-hopping (BH)~\cite{wales1997global, olson2012basin} is a two-phase method that iterates by performing random perturbation of coordinates, performing local optimization, and accepting or rejecting new coordinates based on a minimized function value. There are no convergence guarantees. 
    Simulated Annealing~\cite{kirkpatrick1983optimization} is a stochastic, heuristic method that starts at a randomized point $x$ in
    the parameter space, and then evaluates a neighboring point $x'$, usually chosen at random. In its simplest form, if the value of the objective
    function is lesser at the new point, the new point is accepted, and the process is repeated.
    If the value at the new point is greater, the point is chosen with some acceptance probability $P$; this allows the currently best point considered by the algorithm to zero in on optima in $f$, but to escape local optima.
    \rev{A method with comparable convergence guarantees to our work is the DIRECT (DIviding RECTangles)~\cite{jones1993lipschitzian} method, which partitions a bound-constrained domain into $2n + 1$ hyper-rectangles with an evaluated point at the centre of each. Each hyper-rectangle is scored via a combination of the length of its longest side and the function value at its centre. This scoring favors hyper-rectangles exhibiting both long sides and small function values; the best-scoring hyper-rectangles are further divided.}
    
    \paragraph{Moreau Envelope Minimization} \rev{
    Our work     relates to entropy gradient descent (EGD)~\cite{chaudhari2019entropy,chaudhari2018deep}, which fixes a time $T$ for which to compute the Moreau envelope. 
    There gradients of the Moreau envelope are  approximated using a subroutine involving (stochastic) gradient  descent. A key difference is HJ-MAD is a zero-order method. Moreover,   HJ-MAD  does not require a subroutine and instead leverages the Cole-Hopf~\cite{evans2010partial} formula to approximate the gradient via an expectation as in~\eqref{eq: u-viscous-gradient-expectation}. Also, time is adaptive in HJ-MAD whereas it is fixed in EGD. Another work related to ours is the Bend, Mix and Release (BMR), which provides a smooth approximation of the Moreau envelope~\cite{scaman2020simple} (albeit without the HJ PDE). Finally, many works also consider minimization of the Moreau envelope, either for weakly convex functions~\cite{davis2018stochastic, doi:10.1137/18M1178244}, or escaping saddle points of the Moreau envelope~\cite{davis2022escaping}.}

    \paragraph{Zero-Order Algorithms} HJ-MAD falls under the category of zero-order methods as it does not require gradients of $f$. In fact, HJ-MAD does not require that $f$ be differentiable. Related methods include the following. Random Gradients~\cite{ermoliev1988numerical, kozak2019stochastic, kozak2021stochastic,kozak2021zeroth} project gradients onto a random subspace. ZORO~\cite{cai2022zeroth} assumes sparsity of the gradient and is aimed at high-dimensional problems. ZO-BCD~\cite{cai2021zeroth} provides a sub-linear query and per-iteration computational complexity. 
    NOMAD is a mesh-adaptive direct search algorithm which are based on progressive-barrier or filter approaches to deal with constraint inequalities and has similar convergence properties as HJ-MAD~\cite{le2011algorithm}.
    Other zero-order methods include derivative-free quasi-Newton methods~\cite{berahas2019derivative,larson2019derivative, more2009benchmarking}, finite-difference-based methods~\cite{shi2021numerical, shi2021adaptive}, numerical quadrature-based methods~\cite{kim2021curvature, almeida1990learning}\rev{, Bayesian methods~\cite{larson2019derivative}, and comparison methods~\cite{cai2022one}.}

\section{Experiments}

We test HJ-MAD on a set of non-convex benchmark test functions obtained from the Virtual Library of Simulation Experiments~\cite{simulationlib}. All experiments were run via Google Colaboratory~\cite{bisong2019google}.  
We start by showing the efficacy of HJ-MAD on the highly nonconvex 2D Griewank function  
\begin{equation}
    \begin{split}
    f(x) \triangleq  
    1 + \sum_{i=1}^{n} \dfrac{x_i^2}{4000} - \prod_{i=1}^n \cos\left( \dfrac{x_i}{\sqrt{i}}\right),
    \end{split}
\end{equation}
which has many widespread local minima. Optimization paths are shown in Figure \ref{fig: Griewank-optim-paths} for HJ-MAD  and Gradient Descent (GD).
For HJ-MAD, we use 100 samples to estimate the derivative of the Moreau envelope according to \eqref{eq: u-viscous-gradient-expectation}.  
\rev{As gradient descent is a local optimization algorithm, it converges to a local minimum while HJ-MAD converges to the global minimizer.}



\newpage
As a more extensive experiment, we compare HJ-MAD with a series of global optimization algorithms mentioned in Section~\ref{sec: related-works}. In particular, we compare HJ-MAD with built-in global optimization algorithms from the Python-based package SciPy~\cite{scipy}. These algorithms include PRS~\cite{rastrigin1963convergence}, DE~\cite{storn1997differential}, BH~\cite{olson2012basin,wales1997global}, and Dual Annealing~\cite{kirkpatrick1983optimization}. 
These algorithms are tested on a series of   non-convex benchmark test functions obtained from the Virtual Library of Simulation Experiments~\cite{simulationlib}. Description of these functions can be found in~\cite{bisong2019google}.
In Table~\ref{tab: benchmark_functions}, we list the number of function (and gradient if used) evaluations to get to convergence \rev{(\ie within tolerance $5\times 10^{-2}$ of the global minimum)}.
Our results show HJ-MAD converges for every test function.
\vspace{5mm}
\begin{table}[H]
    \small
    \centering
    \begin{tabular}{|c|c|c|c|c|c|c|}
        \hline
         & HJ-MAD & PRS & DE~\cite{storn1997differential} & BH~\cite{olson2012basin} & Annealing~\cite{kirkpatrick1983optimization}
         \\
         \hline
         Griewank &  \rev{167}  &  460K &  N &  N & 451.4K
         \\
         \hline
         Drop-Wave &  \rev{9111} &  52.5K &  1152 &  N &  485.8K
         \\
         \hline
         Alpine N.1 &  \rev{635} & 755.6K & N & N  & N
         \\
         \hline
         Ackley  & \rev{498} & 243.2K & 3003 & 476(116)  & 3.7M
         \\
         \hline
         Levy &  \rev{5433} & N & N & N & N
         \\
         \hline
         Rastrigin & \rev{500}  & 660.2K & 2223 &  48(12)  & 590.2K
         \\
         \hline
    \end{tabular}
    \caption{Comparison of global optimization algorithms. Rows show benchmark functions and columns show algorithms. The number in each box gives used function (and gradient in parenthesis) evaluations.   ``N'' means the method did not converge. HJ-MAD results are averaged over \rev{30} trials.}
    \label{tab: benchmark_functions}
\end{table}

\begin{figure}[t]
    \centering
    \vspace*{-10pt}
    \subfloat[Objective Value vs. Iteration]{\includegraphics{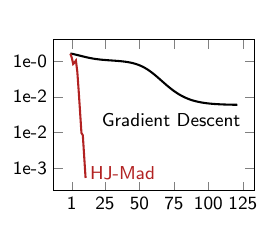}}
    \hspace*{-5pt}
    \subfloat[Relative Error vs. Iteration]{\includegraphics{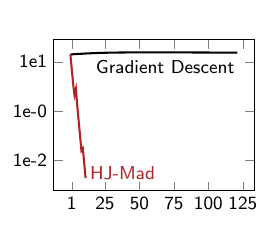}}
    \hspace*{-0pt}
    \subfloat[Optimization Paths]{\includegraphics[width=1.85in]{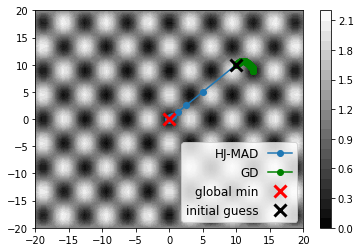}}

    \caption{\rev{Objective   values (left), relative errors (center), and optimization paths (right) for HJ-MAD. HJ-MAD converges to  the global minimum  while gradient descent converges to a local minimum.}}
    \label{fig: Griewank-optim-paths}
\end{figure}

\section{Conclusion} \label{sec: conclusion}
We propose a Hamilton-Jacobi-based Moreau Adaptive Descent (HJ-MAD) method for finding global solutions to optimization problems. 
HJ-MAD is a zero-order algorithm that does not require \rev{differentiable objective functions}. 
Our approach is based on two key ideas.
First, we use the Moreau envelope with sufficiently large time $t$ such that its minimizers are  global minimizers of the objective function. 
Second, we leverage connections with Hamilton-Jacobi equations to obtain analytic expressions for the Moreau envelope and its gradient. In particular, we use the Cole-Hopf and Hopf-Lax formulas to \rev{express} the gradient of the Moreau envelope as an expectation. 
To make the sampling of these expectations more efficient, we include an adaptive time-stepping scheme.
\rev{Our work also provides a way to estimate proximal operators, in general, using   connections to HJ equations.}
\rev{In our experiments,   HJ-MAD is the most efficient algorithm and} always manages to converge to the global minimizer. Future work may improve the efficiency of HJ-MAD.

\section*{Acknowledgements}

{This work greatly benefited from feedback by  anonymous reviewers.   HH, SWF, and SO were partially funded by AFOSR MURI FA9550-18-502, ONR N00014-18-1-2527, N00014-18-20-1-2093, N00014-20-1-2787}.
HH was also supported by the  NSF Graduate Research Fellowship under Grant No. DGE-1650604. Any opinion, findings, and conclusions or recommendations expressed in this material are those of the authors and do not necessarily reflect the views of the NSF. 

\newpage
 
\bibliographystyle{abbrv}
\bibliography{refs}

 \appendix

 
\section{Proofs} \label{app: proofs}

\rev{Below a sequence of lemmas is provided to obtain the main result. The first lemma is an elementary result,   a minor tweak on known results (\eg the early works \cite{moreau1962decomposition,moreau1965proximite}   about proximals being nonempty). }

\begin{lemma} \label{lemma: nonempty-prox}
    Given time $t\in(0,\infty)$, if Assumptions \ref{ass: f} and \ref{ass: gamma-minimizers} hold for $f\colon\bbR^n\rightarrow\bbR$,  then $\prox{tf}(x)$ is nonempty for all $x\in\bbR^n$. 
\end{lemma}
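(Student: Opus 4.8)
The plan is to show the infimum defining $\prox{tf}(x)$ is attained by a standard coercivity-plus-continuity (Weierstrass) argument, with the compactness in Assumption~\ref{ass: gamma-minimizers}(i) doing the work of confining the minimizing sequence. Fix $x\in\bbR^n$ and $t\in(0,\infty)$, and write $\varphi(z) \triangleq f(z) + \frac{1}{2t}\|z-x\|^2$; we want $\varphi$ to attain its infimum. The function $\varphi$ is continuous by Assumption~\ref{ass: f}, so it suffices to restrict the minimization to a compact set that is guaranteed to contain a point with value at least as small as the infimum over all of $\bbR^n$.

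First I would bound the infimum: since $f$ is bounded below (say $f \geq \inf f > -\infty$, which is part of the standing hypotheses on $f$), we have $m \triangleq \inf_{z}\varphi(z) \in \bbR$, and evaluating at $z=x$ gives $m \leq \varphi(x) = f(x)$. Next I would use coercivity of the quadratic term: because $f \geq \inf f$, for any $z$ with $\frac{1}{2t}\|z-x\|^2 > f(x) - \inf f$ we get $\varphi(z) > f(x) \geq m$, so such $z$ cannot be (near-)minimizers. Hence the minimization of $\varphi$ over $\bbR^n$ coincides with its minimization over the closed ball $B \triangleq \{z : \|z-x\|^2 \leq 2t(f(x)-\inf f)\}$, which is compact. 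By the Weierstrass extreme value theorem, the continuous function $\varphi$ attains its minimum over $B$ at some point $z^\star$, and by the preceding reduction $z^\star$ is a global minimizer of $\varphi$ over $\bbR^n$. Therefore $z^\star \in \prox{tf}(x)$, which is consequently nonempty.

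I should double-check which hypotheses are actually needed: continuity of $f$ (Assumption~\ref{ass: f}) gives continuity of $\varphi$, and boundedness below of $f$ — stated in the paper's setup right before \eqref{eq: constrained-minimization-problem} — gives both finiteness of $m$ and the coercivity estimate. Assumption~\ref{ass: gamma-minimizers} is not genuinely required for this particular lemma, since the sublevel-ball $B$ is compact on its own; but if one wishes to invoke it, one can instead argue that for sufficiently small sublevel values the relevant set sits inside $\sS_\gamma$. I would keep the self-contained ball argument, as it is cleaner and relies only on coercivity.

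The only mild subtlety — hardly an obstacle — is ensuring the reduction to the compact set is airtight: one must phrase it so that the infimum over $B$ equals the infimum over $\bbR^n$ (not merely that minimizers lie in $B$, which would be circular before existence is known). The clean way is the strict inequality above: every $z \notin B$ satisfies $\varphi(z) > \varphi(x) \geq \inf_{\bbR^n}\varphi$, so $\inf_B \varphi = \inf_{\bbR^n}\varphi$, and then Weierstrass on $B$ finishes the argument.
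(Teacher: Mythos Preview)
Your proposal is correct and follows essentially the same approach as the paper: both arguments restrict the minimization of the continuous function $\varphi(z)=f(z)+\frac{1}{2t}\|z-x\|^2$ to a compact set (you use the closed ball $B$, the paper uses the sublevel set $\sT=\{z:\varphi(z)\le f(x)+1\}$) and then invoke the extreme value theorem, with the paper additionally routing through a minimizing sequence and Bolzano--Weierstrass. The only substantive difference is how the lower bound on $f$ is obtained---the paper uses Assumption~\ref{ass: gamma-minimizers}(i) together with the extreme value theorem on $\sS_\gamma$ to produce a global minimizer $x^\star$, whereas you invoke the standing ``bounded from below'' hypothesis directly; your observation that Assumption~\ref{ass: gamma-minimizers} is not genuinely needed here is accurate.
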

\begin{proof}
    Fix $x\in \bbR^n$. 
    Consider 
    the set
    \begin{equation}
        \sT \triangleq \left\lbrace z : f(z) + \dfrac{1}{2t}\|z-x\|^2 \leq f(x) + 1  \right\rbrace  .
    \end{equation}
    By Assumption \ref{ass: gamma-minimizers}i and Assumption \ref{ass: f}, the extreme value theorem asserts $f$ attains its minimum value on $\sS_\gamma$, and so there is a global minimizer $x^\star$ of $f$. Consequently, the constraint in $\sT$  can be rewritten to obtain
    \begin{equation}
        \|z-x\|^2 \leq 2t(f(x) - f(z) + 1)
         \leq 2t(f(x) - f(x^\star) + 1),
    \end{equation} 
    and so $\sT \subseteq B(x, 2t[f(x)-f(x^\star)+1])$, \ie $\sT$ is bounded.    
    Moreover, since $x\in \sT$, $\sT$ is nonempty,  and $\sT$ is closed because $f$ is continuous.
    Due to the fact the minimal value of $f$ is a lower bound, \ie  
    \begin{equation}
        f(z) + \dfrac{1}{2t}\|z-x\|^2 \geq  f(x^\star)
        \ \ \ \mbox{for all $z\in\bbR^n$,}
    \end{equation}
    the set 
    \begin{equation}
        \left\lbrace  f(z) + \dfrac{1}{2t}\|z-x\|^2 : z\in \bbR^n\right\rbrace 
    \end{equation}
    has an infimum. 
    Since $x \in \sT$, this infimum is less than or equal to $f(x)$.
    Thus, there exists a sequence $\{z^k\} \subset \bbR^n$ such that
    \begin{equation}
        \lim_{k\rightarrow\infty} f(z^k) + \dfrac{1}{2t}\|z^k-x\|^2
        = \inf_{z\in\bbR^n} f(z) + \dfrac{1}{2t}\|z-x\|^2.
    \end{equation}
    By definition of this limit and the fact the infimum does not exceed $f(x)$, there is $N\in \bbN$ such that 
    \begin{equation}
        f(z^k) + \dfrac{1}{2t}\|z^k-x\|^2 
        \leq \left( \inf_{z\in\bbR^n} f(z) + \dfrac{1}{2t}\|z-x\|^2 \right) + 1
        \leq f(x) + 1, \ \ \ \mbox{for all $k\geq N$,}
    \end{equation}
    and so $z^k \in \sT$ for $k\geq N$. This implies the sequence $\{z^k\}$ is bounded, whereby the Bolzano–Weierstrass theorem may be applied to deduce existence of a convergent subsequence $\{z^{n_k}\}\subseteq \{z^k\}$ with limit $\hat{x} \in \sT$ (\nb $\sT$ is closed).
    Then observe
    \begin{equation}
        f(\hat{x}) + \dfrac{1}{2t}\|\hat{x}-x\|^2
        = \lim_{k\rightarrow\infty} f(z^{n_k}) + \dfrac{1}{2t}\|z^{n_k}-x\|^2
        =  \inf_{z\in\bbR^n} f(z) + \dfrac{1}{2t}\|z-x\|^2,
    \end{equation}
    which implies $\hat{x}\in \prox{tf}(x)$, and the proof is complete.
\end{proof}

\vspace*{0.5in}

Below is a nearly trivial result that is also well-known for Moreau envelopes.

 \begin{lemma} \label{lemma: uT-ut}
    Given a time $t\in(0,\infty)$, if Assumptions \ref{ass: f} and \ref{ass: gamma-minimizers} hold for   $f\colon\bbR^n\rightarrow{\bbR}$, then $0 < t\leq T$ implies
    \begin{equation}
        u(x, T) \leq u(x, t) \ \ \ \mbox{for all} \ \  x\in \bbR^n.
    \end{equation}
\end{lemma}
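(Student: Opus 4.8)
The plan is to exploit the fact that the weight $1/(2t)$ multiplying the quadratic penalty in the definition of the Moreau envelope is decreasing in $t$, together with nonnegativity of $\|\cdot\|^2$, and then simply compare the two minimization problems by using a minimizer of the $t$-problem as a feasible point for the $T$-problem.

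Concretely, fix $x\in\bbR^n$ and suppose $0 < t \leq T$. By Lemma~\ref{lemma: nonempty-prox}, under Assumptions~\ref{ass: f} and \ref{ass: gamma-minimizers} the set $\prox{tf}(x)$ is nonempty, so we may pick $\hat{x}\in\prox{tf}(x)$, for which
\begin{equation}
    u(x,t) = f(\hat{x}) + \dfrac{1}{2t}\|\hat{x}-x\|^2.
\end{equation}
Since $0 < t \leq T$ we have $\tfrac{1}{2t} \geq \tfrac{1}{2T} > 0$, and $\|\hat{x}-x\|^2 \geq 0$, hence
\begin{equation}
    f(\hat{x}) + \dfrac{1}{2t}\|\hat{x}-x\|^2 \geq f(\hat{x}) + \dfrac{1}{2T}\|\hat{x}-x\|^2.
\end{equation}
Finally, $\hat{x}$ is a feasible candidate in the minimization defining $u(x,T)$, so the right-hand side is at least $\min_{z\in\bbR^n} f(z) + \tfrac{1}{2T}\|z-x\|^2 = u(x,T)$. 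Chaining these three relations gives $u(x,t) \geq u(x,T)$, and since $x$ was arbitrary the claim follows.

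I do not anticipate a genuine obstacle here; the only point requiring care is that the minimum defining $u(x,t)$ is actually attained (so that a concrete $\hat{x}$ can be substituted into the $T$-problem), which is exactly what Lemma~\ref{lemma: nonempty-prox} supplies. If one wished to avoid invoking that lemma, the same argument goes through verbatim using an infimizing sequence $\{z^k\}$ for $u(x,t)$ and passing to the limit, since $f(z^k) + \tfrac{1}{2T}\|z^k-x\|^2 \leq f(z^k) + \tfrac{1}{2t}\|z^k-x\|^2 \to u(x,t)$ while the left side is bounded below by $u(x,T)$.
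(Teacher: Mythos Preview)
Your proof is correct and follows essentially the same approach as the paper: invoke Lemma~\ref{lemma: nonempty-prox} to obtain $\hat{x}\in\prox{tf}(x)$, then chain $u(x,T)\leq f(\hat{x})+\tfrac{1}{2T}\|\hat{x}-x\|^2\leq f(\hat{x})+\tfrac{1}{2t}\|\hat{x}-x\|^2=u(x,t)$. The added remark about bypassing Lemma~\ref{lemma: nonempty-prox} via an infimizing sequence is a nice observation but not needed here.
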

\begin{proof}
    By Lemma~\ref{lemma: nonempty-prox}, $\hat{x}\in\prox{tf}(x)$ exists, and we have
    \begin{equation}
        u(x, T) \leq f(\hat{x}) + \frac{1}{2T} \|\hat{x} - x\|^2
        \leq f(\hat{x}) + \frac{1}{2t} \|\hat{x} - x\|^2
        = u(x,t),
        \ \ \ \mbox{for all $x\in\bbR^n$.}
    \end{equation}
\end{proof}

Below \rev{is a gradient $\nabla u(x,t)$ formula}   \textit{without} assuming $f$ is convex. This   is a modification of Proposition 3.1 in \cite{jourani2014differential}. \rev{This is   a minor extension of a well-known result in convex analysis originating from Moreau \cite{moreau1965proximite} (\eg see Proposition 12.30 in \cite{bauschke2017convex}, Theorem 6.60 in \cite{beck2017first}, and Thereom 31.5 in \cite{rockafellar1970convex}.)}

\begin{lemma} \label{lemma: u-local-minimizer}
    Given a time $t\in(0,\infty)$, if Assumptions \ref{ass: f} and \ref{ass: gamma-minimizers} hold for   $f\colon\bbR^n\rightarrow{\bbR}$ and if the envelope $u(\cdot,t)$ of $f$ is differentiable at $x \in \bbR^n$, then
    $\prox{tf}(x)$ is a singleton set and
    \begin{equation}
        \nabla u(x,t) = \dfrac{x-\hat{x}}{t},
        \ \ \ \mbox{where}\ \ \ 
        \hat{x} \triangleq \prox{tf}(x) = \argmin_{z\in \bbR^n} f(z) + \dfrac{1}{2t}\|z-x\|^2.
        \label{eq: u-envelope-gradient-formula-lemma}
    \end{equation} 
\end{lemma}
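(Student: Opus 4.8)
The plan is to derive a two-sided directional inequality for the difference quotients of $u(\cdot,t)$ near $x$ using the variational characterization of the envelope, and then squeeze the gradient. First I would invoke Lemma~\ref{lemma: nonempty-prox} to guarantee that $\prox{tf}(x)$ is nonempty, and fix an arbitrary $\hat{x}\in\prox{tf}(x)$, so that $u(x,t) = f(\hat{x}) + \frac{1}{2t}\|\hat{x}-x\|^2$. The key elementary observation is that $\hat{x}$ is a feasible (generally suboptimal) competitor for the minimization defining $u(x+h,t)$: evaluating the objective $z\mapsto f(z) + \frac{1}{2t}\|z-(x+h)\|^2$ at $z=\hat{x}$ and expanding the square gives, for every $h\in\bbR^n$,
\begin{equation}
    u(x+h,t) - u(x,t) \le \frac{1}{t}\left\langle x-\hat{x},\, h\right\rangle + \frac{1}{2t}\|h\|^2 .
\end{equation}

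Next I would use the hypothesis that $u(\cdot,t)$ is differentiable at $x$, writing $u(x+h,t)-u(x,t) = \langle \nabla u(x,t), h\rangle + o(\|h\|)$ as $h\to 0$. Substituting $h = s v$ for a fixed unit vector $v$ and $s>0$, dividing the displayed inequality by $s$, and letting $s\downarrow 0$ yields $\langle \nabla u(x,t), v\rangle \le \frac{1}{t}\langle x-\hat{x}, v\rangle$; repeating the argument with $-v$ in place of $v$ gives the reverse inequality. Since $v$ was an arbitrary unit vector, this forces $\nabla u(x,t) = (x-\hat{x})/t$, which is exactly \eqref{eq: u-envelope-gradient-formula-lemma}.

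Finally, the singleton claim comes for free: the computation above did not depend on which element $\hat{x}$ of $\prox{tf}(x)$ was chosen, so every such element must equal $x - t\,\nabla u(x,t)$; hence $\prox{tf}(x)$ consists of exactly one point. I do not expect a serious obstacle here — this is a standard envelope/squeeze computation — but the one place to be careful is the passage to the directional limit: one must divide by $s$ \emph{before} sending $s\downarrow 0$ so that both the $\frac{1}{2t}\|h\|^2 = \frac{s^2}{2t}$ term and the $o(\|h\|)=o(s)$ term vanish, and one must carry this out separately for $v$ and $-v$ because only an upper bound on the increment is available (the envelope need not be convex or concave, so one cannot get the matching lower bound directly).
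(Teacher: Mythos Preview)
Your proposal is correct and follows essentially the same approach as the paper: both arguments use $\hat{x}$ as a competitor in the minimization defining $u$ at a perturbed point to get a one-sided directional bound, then pass to the limit and exploit that the bound holds for every direction. The only cosmetic difference is that the paper phrases the final step as a contradiction (from $\langle \nabla u(x,t) - (x-\hat{x})/t,\, z\rangle \ge 0$ for all $z$) rather than explicitly plugging in $\pm v$, and the singleton conclusion is obtained identically.
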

\begin{proof}
    By Lemma \ref{lemma: nonempty-prox}, there exists $\hat{x}\in \prox{tf}(x)$. 
    Fix any $h \in \bbR^n$. By   definition of the gradient, 
    \begin{subequations}
    \begin{align}
        \left< \nabla u(x,t), h\right>
        & = \lim_{\lambda \rightarrow 0} \dfrac{u(x,t) - u(x-\lambda h, t)}{\lambda  }\\
        & =  \lim_{\lambda \rightarrow 0}  \dfrac{f(\hat{x}) + \frac{1}{2t}\|\hat{x}-x\|^2 - u(x-\lambda h,t)}{\lambda  } 
        \label{eq: u-substitution-proof}\\
        & \geq  \lim_{\lambda \rightarrow 0}  \dfrac{f(\hat{x}) + \frac{1}{2t}\|\hat{x}-x\|^2 -  f(\hat{x}) - \frac{1}{2t}\|\hat{x}-(x-\lambda h)\|^2}{\lambda } \label{eq: u-substitution-proof-2}\\
        & = \lim_{\lambda\rightarrow 0} \dfrac{ \frac{1}{t}\left< \hat{x}-x,  -\lambda h\right> - \frac{1}{2t}\|\lambda h\|^2 }{\lambda  } \\
        & = \left< \dfrac{x-\hat{x}}{t}, h\right>,
    \end{align}\label{eq: gateaux-directional-derivative-limit}\end{subequations}
    where the equality (\ref{eq: u-substitution-proof}) holds because $\hat{x}\in\prox{tf}(x)$ and the subsequent inequality (\ref{eq: u-substitution-proof-2}) follows from the fact $u(x-\lambda  h,t)$ is the infimum of $f(z) + \frac{1}{2t}\|z-(x-\lambda h)\|^2$ among all $z\in\bbR^n$.
    Since (\ref{eq: gateaux-directional-derivative-limit}) holds for arbitrarily chosen $h$, this limit inequality holds over    $\bbR^n$.  Consequently, 
    \begin{equation}
        \left< \nabla u(x,t) - \dfrac{x-\hat{x}}{t}, z\right> \geq 0, \ \ \ \mbox{for all $z\in\bbR^n.$}
    \end{equation}
    By way of contradiction, suppose the gradient formula in (\ref{eq: u-envelope-gradient-formula-lemma}) does not hold, \ie 
    \begin{equation}
        \nabla u(x,t) - \dfrac{x-\hat{x}}{t} \neq 0.
        \label{eq: Dx-grad-u-neq-0}
    \end{equation}
    In such a case, taking
    \begin{equation}
        z =    \dfrac{x-\hat{x}}{t} - \nabla u(x,t)
    \end{equation}
    yields
    \begin{equation}
        0 \leq \left<  \nabla u(x,t) - \dfrac{x-\hat{x}}{t}, z\right>
        = - \left\| \nabla u(x,t) - \dfrac{x-\hat{x}}{t}\right\|^2
        < 0,
    \end{equation}
    which implies $0 < 0$, a contradiction. Thus, (\ref{eq: Dx-grad-u-neq-0}) must be false, from which the result  (\ref{eq: u-envelope-gradient-formula-lemma}) follows. \\
    
    All that remains is to verify $\hat{x}$ is the unique element of $\prox{tf}(x)$.
    To this end, fix any $\hat{y}\in \prox{tf}(x)$ and observe, repeating the same argument as for the gradient formula   above,
    \begin{equation}
        \dfrac{x-\hat{y}}{t} = \nabla u(x,t) = \dfrac{x-\hat{x}}{t}
        \ \ \ \iff \ \ \ 
        \hat{x}=\hat{y}.
    \end{equation}
    That is, $\hat{y}\in \prox{tf}(x)$ if and only if $\hat{y} = \hat{x}$, completing the proof. 
\end{proof}

\rev{The following lemma is again a generalization of well-known results in convex analysis; here again the proof adapts arguments from \cite{jourani2014differential}.}
\begin{lemma} \label{lemma: share-critical-points}
    Given a time $t\in(0,\infty)$, if Assumptions \ref{ass: f} and \ref{ass: gamma-minimizers} holds for   $f\colon\bbR^n\rightarrow{\bbR}$ and if $x\in\bbR^n$ is a local minimizer of the envelope $u(\cdot, t)$ of $f$, then $x$ is a local minimizer of $f$, $u(x,t)=f(x)$, $x = \prox{tf}(x)$, $u(\cdot,t)$ is differentiable at $x$, and
    \begin{equation}
        \nabla u(x,t) = 0.
    \end{equation}
\end{lemma}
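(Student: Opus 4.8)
The plan is to squeeze the Moreau envelope between an easy global upper bound and the local-minimality lower bound, and then read off all five conclusions. Write $x$ for the given local minimizer of $u(\cdot,t)$ and fix a neighborhood $N$ of $x$ on which $u(y,t)\ge u(x,t)$. By Lemma~\ref{lemma: nonempty-prox} pick $\hat{x}\in\prox{tf}(x)$, so that $u(x,t)=f(\hat{x})+\tfrac{1}{2t}\|\hat{x}-x\|^2$ is finite.

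The first step is the estimate obtained by testing the infimum defining $u(y,t)$ with $z=\hat{x}$: subtracting $u(x,t)$ and expanding $\|\hat{x}-y\|^2$ gives, for every $y\in\bbR^n$,
\[
 u(y,t)-u(x,t)\ \le\ \tfrac{1}{t}\langle x-\hat{x},\,y-x\rangle+\tfrac{1}{2t}\|y-x\|^2 .
\]
Next I would plug $y=x+\lambda(\hat{x}-x)$ for small $\lambda>0$ (which lies in $N$): the right-hand side becomes $\tfrac{\lambda}{t}\|\hat{x}-x\|^2\big(\tfrac{\lambda}{2}-1\big)$, strictly negative for $\lambda\in(0,2)$ unless $\hat{x}=x$; since local minimality forces the left-hand side to be $\ge 0$, we must have $\hat{x}=x$. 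That immediately yields $u(x,t)=f(x)$ and $x\in\prox{tf}(x)$, and running the same substitution on an arbitrary point of $\prox{tf}(x)$ shows $\prox{tf}(x)=\{x\}$ (alternatively, Lemma~\ref{lemma: u-local-minimizer} supplies this singleton property once differentiability is in hand).

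With $\hat{x}=x$ the displayed estimate collapses to $0\le u(y,t)-u(x,t)\le\tfrac{1}{2t}\|y-x\|^2$ for $y\in N$; dividing by $\|y-x\|$ and letting $y\to x$ shows $u(\cdot,t)$ is differentiable at $x$ with $\nabla u(x,t)=0$. Finally, since $u(\cdot,t)\le f$ pointwise, for $y\in N$ we get $f(y)\ge u(y,t)\ge u(x,t)=f(x)$, so $x$ is a local minimizer of $f$.

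There is no deep obstacle here; the one point to handle with care is that the quadratic remainder $\tfrac{1}{2t}\|y-x\|^2$ is $o(\|y-x\|)$, which is exactly what promotes the trivial fact $0\in\partial u(x,t)$ to genuine Fréchet differentiability with vanishing gradient. I would also be careful to keep every use of the local-minimality inequality confined to $N$, which is harmless because differentiability and the local-minimizer property are local notions.
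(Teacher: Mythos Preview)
Your proposal is correct and follows essentially the same route as the paper's proof: both hinge on the one--sided quadratic upper bound
\[
u(y,t)-u(x,t)\le \tfrac{1}{t}\langle x-\hat{x},\,y-x\rangle+\tfrac{1}{2t}\|y-x\|^2,
\]
obtained by testing the infimum at $z=\hat{x}$, and both specialize $y$ along the direction $\hat{x}-x$ to force $\hat{x}=x$. Your final two steps are mildly streamlined relative to the paper: you read off Fr\'echet differentiability directly from the sandwich $0\le u(y,t)-u(x,t)\le \tfrac{1}{2t}\|y-x\|^2$, whereas the paper computes directional derivatives and applies the squeeze lemma; and you deduce that $x$ is a local minimizer of $f$ directly from $f\ge u(\cdot,t)\ge u(x,t)=f(x)$ on $N$, whereas the paper argues by contradiction.
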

\begin{proof}
    Suppose $x$ is a local minimizer of $u(\cdot,t)$. By Lemma \ref{lemma: nonempty-prox}, there exists $\hat{x} \in \prox{tf}(x).$  We first show $x=\hat{x}$ (Step 1), which implies $u(x,t)=f(x)$. This is used to verify $\nabla u(x,t)=  0$ (Step 2), and we conclude by showing $x$ is a local minimizer of $f$ (Step 3).

    {\bf Step 1.}     
    Fix any $h\in\bbR^n$. Since $x$ is a local minimizer of $u$, there is $\overline{\lambda} > 0$ such that, for $\lambda \in (-\overline{\lambda},\overline{\lambda})$,
    \begin{subequations}
    \begin{align}
        u(x-\lambda h, t)
        & \leq f(\hat{x}) + \dfrac{1}{2t}\|\hat{x} - (x-\lambda h)\|^2 \\
        & = f(\hat{x}) + \dfrac{1}{2t}\|\hat{x}-x\|^2 + \left<\dfrac{\hat{x}-x}{t}, \lambda h\right> + \dfrac{\lambda^2}{2t}\|h\|^2 \\
        & = u(x,t) + \left<\dfrac{\hat{x}-x}{t}, \lambda h\right> + \dfrac{\lambda^2}{2t}\|h\|^2.
    \end{align}
    \end{subequations}
    Upon rearrangement, we deduce
    \begin{equation}
        \left<\dfrac{x-\hat{x}}{t}, \lambda h\right> - \dfrac{\lambda^2}{2t}\|h\|^2
        \leq u(x,t) - u(x-\lambda h,t)
        \leq 0,
        \ \ \ \mbox{for all $\lambda\in(-\overline{\lambda},\overline{\lambda})$.}
    \end{equation}
    This implies
    \begin{equation} 
        0 
        \geq \lim_{\lambda\rightarrow 0^+} \dfrac{u(x,t) - u(x-\lambda h,t)}{\lambda} 
        \geq  \lim_{\lambda\rightarrow 0^+ } \left<\dfrac{x-\hat{x}}{t},h\right> - \dfrac{\lambda}{2t}\|h\|^2 
        =  \left<\dfrac{x-\hat{x}}{t},h\right>.
        \label{eq: lemma-limit-minimizer-1} 
    \end{equation}
    By the arbitrariness of $h$, we may choose
    \begin{equation}
        h = \dfrac{\hat{x}-x}{t}
    \end{equation}
    and apply (\ref{eq: lemma-limit-minimizer-1}) to find
    \begin{equation}
        \left\| \dfrac{x-\hat{x}}{t}\right\|^2 \leq 0
        \ \  \ \implies \ \ \ 
        x = \hat{x}.
        \label{eq: lemma-critical-point-x-x-hat}
    \end{equation}
    
    {\bf Step 2.} Plugging $x=\hat{x}$ back into (\ref{eq: lemma-limit-minimizer-1}) furthermore reveals, by the squeeze lemma,
    \begin{equation}
        \left<\nabla u(x,t), h\right> = \lim_{\lambda \rightarrow 0} \dfrac{u(x,t) - u(x-\lambda h,t)}{\lambda}
         = 0,
         \ \ \ \mbox{for all $h\in\bbR^n$.}
    \end{equation}
    Since $h$ was arbitrarily chosen, it follows that $\nabla u(x,t) = 0$.
    
    {\bf Step 3.} By way of contradiction, suppose $x$ is not a local minimizer of $f$. This would imply existence of  nonzero $h\in\bbR^n$ and $\hat{\lambda}\in(0,\infty)$ such that 
    \begin{equation}
        f(x+\lambda h) < f(x) \ \ \ \mbox{for all $\lambda \in (0,\hat{\lambda})$.}
        \label{eq: lemma-contradiction-f-x-minimizer}
    \end{equation}
    However, our above results together with the truth of (\ref{eq: lemma-contradiction-f-x-minimizer}) would imply
    \begin{subequations}
        \begin{align}
            u(x+\lambda h,t)
            &\leq f(x+\lambda h) + \dfrac{1}{2t}\|(x+\lambda h) - (x+\lambda h)\|^2\\
            &= f(x+\lambda h,t)\\
            &< f(x)\\
            &= u(x,t),
            \ \ \ \mbox{for all $\lambda \in (0,\hat{\lambda})$,}
        \end{align}
    \end{subequations}
    contradicting the fact $x$ is a local minimizer of $u(\cdot,t)$. Thus, $x$ is a local minimizer of $f$.
\end{proof}

\newpage

\rev{The lemma below is widely known in the case where $f$ is convex. For completeness, we include its adaptation to our setting with a more general subdifferential definition and unique assumptions.}
\begin{lemma} \label{lemma: u=f}
    Given a time $t\in(0,\infty)$, if Assumptions \ref{ass: f} and \ref{ass: gamma-minimizers} hold for   $f\colon\bbR^n\rightarrow{\bbR}$ and if there is $x\in\bbR^n$ such that $f(x)=u(x,t)$, then $0 \in \partial f(x)$.    
\end{lemma}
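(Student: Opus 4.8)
The hypothesis $f(x)=u(x,t)$ says exactly that $x$ achieves the minimum defining the Moreau envelope at the point $x$ (equivalently $x\in\prox{tf}(x)$), and from this the subgradient inequality with $v=0$ falls out immediately because the quadratic penalty term is $o(\|z-x\|)$. So the plan is essentially a one-line unpacking of Definition~\ref{def: subdifferential}.

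\textbf{Key steps.} First I would use the defining formula $u(x,t)=\min_{z\in\bbR^n} f(z)+\frac{1}{2t}\|z-x\|^2$ together with the assumption $f(x)=u(x,t)$ to obtain, for every $z\in\bbR^n$,
\begin{equation}
    f(x) = u(x,t) \leq f(z) + \dfrac{1}{2t}\|z-x\|^2 .
\end{equation}
(One may invoke Lemma~\ref{lemma: nonempty-prox} to know the minimum is attained, but this is not even needed: the inequality holds already with the infimum.) Rearranging gives $f(z) \geq f(x) - \frac{1}{2t}\|z-x\|^2$ for all $z$. Second, I would observe that $\frac{1}{2t}\|z-x\|^2 = o(\|z-x\|)$ as $z\to x$, since $\|z-x\|^2/\|z-x\| = \|z-x\|\to 0$, and that a $-o(\|z-x\|)$ term is itself an $o(\|z-x\|)$ term. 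Third, I would rewrite the inequality as
\begin{equation}
    f(z) \geq f(x) + \langle 0, z-x\rangle + o(\|z-x\|), \quad \text{as } z\to x,
\end{equation}
which is precisely the condition in Definition~\ref{def: subdifferential} for $v=0$ to belong to $\partial f(x)$. Hence $0\in\partial f(x)$.

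\textbf{Main obstacle.} There is essentially no obstacle here; the only point requiring a moment of care is the bookkeeping with the little-$o$ notation — namely that the strictly negative term $-\frac{1}{2t}\|z-x\|^2$ qualifies as $o(\|z-x\|)$ (the definition allows the remainder to take either sign), so that the required inequality direction ($f(z)$ bounded \emph{below}) is compatible with the one we derived. Everything else is a direct substitution into the definitions.
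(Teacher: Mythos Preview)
Your proposal is correct and follows essentially the same route as the paper: both derive the inequality $f(z)\geq f(x)-\frac{1}{2t}\|z-x\|^2$ for all $z$ and then invoke Definition~\ref{def: subdifferential} with $v=0$, using that the quadratic term is $o(\|z-x\|)$. Your version is in fact slightly more streamlined, since the paper first picks $\hat{x}\in\prox{tf}(x)$ via Lemma~\ref{lemma: nonempty-prox} and only then substitutes $f(x)=u(x,t)$, whereas you go directly from $f(x)=u(x,t)\leq f(z)+\frac{1}{2t}\|z-x\|^2$---as you note, attainment of the infimum is irrelevant here.
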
          
 \begin{proof}
     By Lemma \ref{lemma: nonempty-prox}, there is $\hat{x}\in\prox{tf}(x)$, and so
     \begin{equation}
         f(\hat{x}) + \dfrac{1}{2t}\|\hat{x}-x\|^2
         \leq f(z) + \dfrac{1}{2t}\|z-x\|^2,
         \ \ \ \mbox{for all $z\in\bbR^n$.}
         \label{eq: lemma-proof-subdifferential-01}
     \end{equation}
     By definition of $u$ and the given hypothesis,  
     \begin{equation}
         f(x) = u(x,t) = f(\hat{x}) + \dfrac{1}{2t}\|\hat{x}-x\|^2.
         \label{eq: lemma-proof-subdifferential-02}
     \end{equation}
     Combining (\ref{eq: lemma-proof-subdifferential-01}) and (\ref{eq: lemma-proof-subdifferential-02}) reveals
     \begin{equation}
         f(x) + \left< 0, z-x\right> - \dfrac{1}{2t}\|z-x\|^2
         \leq f(z),\ \ \ \mbox{for all $z\in\bbR^n$.}
         \label{eq: lemma-proof-subdifferential-03}
     \end{equation}
        Since   $\|z-x\|^2 = o(\|z-x\|)$ as $z\rightarrow x$, (\ref{eq: lemma-proof-subdifferential-03}) shows, by Definition \ref{def: subdifferential}, $0\in \partial f(x)$. 
 \end{proof}

\rev{The next two lemmas are additional auxiliary results we introduce to utilize our unique assumptions.}
\begin{lemma}
    \label{lemma: local-u-global-f} 
     Given a time $t\in(0,\infty)$, if Assumptions \ref{ass: f} and \ref{ass: gamma-minimizers} hold for   $f\colon\bbR^n\rightarrow{\bbR}$ and if $x\in\bbR^n$ is a local minimizer of the envelope $u(\cdot, t)$ of $f$ and $t \geq \|x-x^\star\|^2/2\gamma$ for some global minimizer $x^\star$ of $f$, then $x$ is a global minimizer of $f$.
\end{lemma}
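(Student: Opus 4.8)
The plan is to chain together the three preceding lemmas with Assumption~\ref{ass: gamma-minimizers}ii. The target condition to verify is the hypothesis of Assumption~\ref{ass: gamma-minimizers}ii for the point $x$: namely that $0\in\partial f(x)$ and $x\in\sS_\gamma$. Once both are established, the assumption immediately yields that $x$ is a global minimizer of $f$, which is exactly the claim.

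First I would invoke Lemma~\ref{lemma: share-critical-points}: since $x$ is a local minimizer of $u(\cdot,t)$, it gives $u(x,t)=f(x)$ (among other facts we will not need, such as $x=\prox{t f}(x)$ and $\nabla u(x,t)=0$). Feeding the equality $f(x)=u(x,t)$ into Lemma~\ref{lemma: u=f} then produces $0\in\partial f(x)$. This disposes of the first half of the hypothesis.

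For the second half, I would bound $u(x,t)$ from above using the global minimizer $x^\star$ as a feasible point in the defining infimum of the Moreau envelope:
\begin{equation}
    f(x) = u(x,t) \leq f(x^\star) + \dfrac{1}{2t}\|x^\star - x\|^2 = \inf f + \dfrac{1}{2t}\|x - x^\star\|^2.
\end{equation}
The hypothesis $t \geq \|x-x^\star\|^2/2\gamma$ gives $\tfrac{1}{2t}\|x-x^\star\|^2 \leq \gamma$ (trivially if $x=x^\star$, and by direct substitution otherwise), so $f(x)\leq \inf f + \gamma$, i.e. $x\in\sS_\gamma$. Combining $0\in\partial f(x)$ with $x\in\sS_\gamma$ and applying Assumption~\ref{ass: gamma-minimizers}ii finishes the argument.

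There is no serious obstacle here; the proof is essentially an assembly step. The only point requiring a moment's care is making sure the chosen time $t$ is large enough to push $u(x,t)$ below $\inf f + \gamma$ — which is precisely what the quantitative hypothesis on $t$ is engineered to guarantee — and handling the degenerate case $x=x^\star$ separately (where the bound is vacuous but the conclusion is immediate).
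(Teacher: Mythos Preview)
Your proof is correct and follows essentially the same approach as the paper: use Lemma~\ref{lemma: share-critical-points} to get $f(x)=u(x,t)$, combine with the envelope bound at $z=x^\star$ and the hypothesis on $t$ to place $x$ in $\sS_\gamma$, invoke Lemma~\ref{lemma: u=f} for $0\in\partial f(x)$, and conclude via Assumption~\ref{ass: gamma-minimizers}ii. The only cosmetic difference is the order in which you verify the two hypotheses of Assumption~\ref{ass: gamma-minimizers}ii, and your separate handling of $x=x^\star$ is unnecessary (the inequality $\tfrac{1}{2t}\|x-x^\star\|^2\le\gamma$ holds directly in that case too).
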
 
\begin{proof}
    Due to Assumption \ref{ass: gamma-minimizers}i and Assumption \ref{ass: f}, the extreme value theorem asserts $f$ attains its minimum value on $\sS_\gamma$, \ie there is a global minimizer $x^\star$ of $f$. 
    By Lemma \ref{lemma: share-critical-points},   $f(x) = u(x,t)$, and
    \begin{equation}
        f(x)
        = u(x, t)
        \leq f(x^\star) + \dfrac{1}{2t} \|x^\star-x\|^2
        \leq f(x^\star) + \gamma , 
    \end{equation}
    \ie $x\in \sS_\gamma$. 
    By Lemma \ref{lemma: u=f}, $0 \in \partial f(x)$.
    These  facts together with Assumption \ref{ass: gamma-minimizers}ii imply $x$ is a global minimizer of $f$. 
\end{proof}

\begin{lemma} \label{lemma: xk-bounded}
    Given a time $t\in(0,\infty)$, if Assumptions \ref{ass: f}, \ref{ass: gamma-minimizers} and \ref{ass: alg-params} hold for   $f\colon\bbR^n\rightarrow{\bbR}$ and if the sequence $\{u(x^k,t_k)\}$ is monotonically decreasing, then the sequence $\{x^k\}$ is bounded.
\end{lemma}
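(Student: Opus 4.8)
The plan is to leverage the monotone decrease of $\{u(x^k,t_k)\}$ together with the compactness in Assumption \ref{ass: gamma-minimizers}i: I will trap the prox points $\hat{x}^k$ in the sublevel set $\sS_\gamma$ and separately bound the residuals $x^k - \hat{x}^k$, then conclude by the triangle inequality. Notice the update rule for $x^{k+1}$ (Line 5) is not actually needed — only the relation $u(x^k,t_k) = f(\hat{x}^k) + \tfrac{1}{2t_k}\|\hat{x}^k - x^k\|^2$ for $\hat{x}^k \in \prox{t_k f}(x^k)$ (which exists by Lemma \ref{lemma: nonempty-prox}), plus a uniform bound on $t_k$.

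First I would record two preliminaries. By Assumption \ref{ass: f} and Assumption \ref{ass: gamma-minimizers}i the extreme value theorem yields a global minimizer $x^\star$ of $f$, so $\inf f = f(x^\star)$ is attained; take $x^\star$ to be the one appearing in Assumption \ref{ass: alg-params}ii. Next, an induction on the rule (\ref{eq: time-step}) shows $t_k \in [\tau, T]$ for all $k$: the base case is $\tau \le t_1 \le T$ from Assumption \ref{ass: alg-params}ii, and if $t_k \in [\tau,T]$ then $\tau \le \min(\eta_+ t_k, T) \le T$ and $\tau \le \max(\eta_- t_k, \tau) \le \max(t_k,\tau) \le T$, using $\eta_- \in (0,1)$, $\eta_+ > 1$, and $\tau \le t_1 \le T$.

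Next I would bound the envelope values uniformly. Since $u(x,t) = \min_{z} f(z) + \tfrac{1}{2t}\|z-x\|^2 \ge \inf f$ for all $x$ and $t>0$, the hypothesis that $\{u(x^k,t_k)\}$ is monotonically decreasing gives $u(x^k,t_k) \le u(x^1,t_1)$ for every $k$. Evaluating the minimum defining $u(x^1,t_1)$ at $z = x^\star$ and invoking $t_1 \ge \|x^\star - x^1\|^2/2\gamma$ from Assumption \ref{ass: alg-params}ii,
\begin{equation}
    u(x^1,t_1) \le f(x^\star) + \frac{1}{2t_1}\|x^\star - x^1\|^2 \le f(x^\star) + \gamma = \inf f + \gamma,
\end{equation}
hence $u(x^k,t_k) \le \inf f + \gamma$ for all $k$. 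Now I extract the two bounded pieces. Because $\tfrac{1}{2t_k}\|\hat{x}^k - x^k\|^2 \ge 0$, we get $f(\hat{x}^k) \le u(x^k,t_k) \le \inf f + \gamma$, i.e. $\hat{x}^k \in \sS_\gamma$, which is compact (so bounded, say by $R$) by Assumption \ref{ass: gamma-minimizers}i. Also $\tfrac{1}{2t_k}\|\hat{x}^k - x^k\|^2 \le u(x^k,t_k) - f(\hat{x}^k) \le (\inf f + \gamma) - \inf f = \gamma$, so $\|x^k - \hat{x}^k\|^2 \le 2 t_k \gamma \le 2 T \gamma$. The triangle inequality then gives $\|x^k\| \le \|\hat{x}^k\| + \|x^k - \hat{x}^k\| \le R + \sqrt{2T\gamma}$ for all $k$, proving $\{x^k\}$ is bounded.

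The argument is essentially bookkeeping; the only point needing care is the uniform bound $t_k \le T$, without which $\|x^k - \hat{x}^k\|^2 \le 2 t_k \gamma$ would fail to be uniform — this is handled by the elementary induction on (\ref{eq: time-step}) noted above, together with the ordering $\tau \le t_1 \le T$ imposed in Assumption \ref{ass: alg-params}ii.
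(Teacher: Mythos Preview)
Your proof is correct and follows essentially the same approach as the paper: bound $u(x^k,t_k)\le u(x^1,t_1)\le f(x^\star)+\gamma$, deduce that the prox points lie in the compact set $\sS_\gamma$, bound the residuals $\|x^k-\hat{x}^k\|\le\sqrt{2T\gamma}$, and conclude by the triangle inequality. The only cosmetic difference is that the paper packages this by introducing the auxiliary sublevel set $\sQ=\{z:u(z,T)\le f(x^\star)+\gamma\}$, proves $\sQ$ is bounded by contradiction, and uses Lemma~\ref{lemma: uT-ut} to place $x^k\in\sQ$, whereas you work directly at time $t_k$ and instead supply the induction $t_k\le T$; your version is slightly more direct and avoids both the contradiction and the appeal to Lemma~\ref{lemma: uT-ut}.
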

\begin{proof}
    By Assumption \ref{ass: alg-params}ii, there is global minimizer $x^\star$ of $f$.
    Define the set
    $\sQ \triangleq \{ z: u(z, T) \leq f(x^\star)+\gamma  \}$.
    We first show $\sQ$ is bounded (Step 1) and then that $x^k \in \sQ$ for all $k\in\bbN$ (Step 2). \\
    
    {\bf Step 1.} 
     Since $x^\star$ is a global minimizer, $f$ is bounded from below. 
     By way of contradiction, suppose $\sQ$ is unbounded. This implies there is a sequence $\{z^k\}\subseteq\sQ$ such that  
     \begin{equation}
         \limk \|z^k\| = +\infty.
     \end{equation}          
     By Lemma \ref{lemma: nonempty-prox}, there is a sequence $\{\hat{z}^k\}$ such that $\hat{z}^k \in \prox{Tf}(z^k)$, for all $k\in\bbN$, and
     \begin{equation}
         f(\hat{z}^k)
         \leq 
         f(\hat{z}^k) + \dfrac{1}{2T}\|\hat{z}^k - z^k\|^2
         = u(z^k, T)
         \leq f(x^\star) + \gamma         
         \ \ \implies \ \ 
         \hat{z}^k \in \sS_\gamma,
     \end{equation} 
     where the final inequality holds since $z^k\in\sQ$.
     This implies $\{\hat{z}^k\}$ is bounded by   $B > 0$ and
     \begin{equation}        
         \|\hat{z}^k - z^k \|^2
         \leq 2T \left(f(x^\star)  - f(\hat{z}^k) + \gamma \right)
         \leq 2T\gamma,
         \ \ \ \mbox{for all $k\in\bbN$.}
     \end{equation}     
     Thus,
     \begin{equation}
         \infty 
         = \limk \|z^k\|
         \leq \limk \|\hat{z}^k\| + \|\hat{z}^k - z^k\|
         \leq \limk B + \sqrt{2T\gamma},
     \end{equation}
     a contradiction.

    {\bf Step 2.} By the monotonicity of $\{u(x^k,t_k)\}$, Assumption \ref{ass: alg-params}ii, and Lemma \ref{lemma: uT-ut},
    \begin{align}
        u(x^k, T)
        \leq u(x^k, t_k)
        \leq u(x^1,t_1)
        \leq f(x^\star) + \dfrac{1}{2t_1}\|x^\star - x^1\|^2
        \leq f(x^\star) + \gamma,
        \ \ \ \mbox{for all $k\in\bbN$,}
    \end{align} 
    which implies $\{x^k\}\subseteq \sQ$.
\end{proof}

 \newpage
 \rev{The following lemma is a novel result using the assumptions of our setting; it draws inspiration from inequalities in the analysis in Theorem 2.1 of \cite{davis2018stochastic}.}
  \begin{lemma} \label{lemma: u-monotonic-decrease}
       Given a time $t\in(0,\infty)$, if Assumptions \ref{ass: f}, \ref{ass: gamma-minimizers} and \ref{ass: alg-params} hold for   $f\colon\bbR^n\rightarrow{\bbR}$ and if $\{x^k\}$ is generated by the iteration in Lines 3 to 6 of Algorithm \ref{alg: MAD}, then
       the sequence $\{u(x^k,t_k)\}$ monotonically decreases and converges and there is $\mu > 0$ such that
       \begin{equation}
            u(x^{k+1}, t_{k+1})
           \leq u(x^k,t_k) -  \dfrac{\mu }{2} \|x^k-\hat{x}^k\|^2, \ \ \ \mbox{for all $k\in\bbN$,}
           \label{eq: u-induction-sum-decrease}
       \end{equation}
       where $\hat{x}^k \in \prox{t_kf}(x^k)$ for all $k\in\bbN$.
  \end{lemma}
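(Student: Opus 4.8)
The plan is to chain together the defining inequality of the proximal operator, the explicit form of the gradient update, a uniform bound on consecutive time-step ratios, and the step-size window in Assumption~\ref{ass: alg-params}i. First I would record a few facts about $\{t_k\}$: from Assumption~\ref{ass: alg-params}ii we have $t_1\in[\tau,T]$, and an easy induction on the definition~(\ref{eq: time-step}) of $\mathrm{TimeStep}$ gives $t_k\in[\tau,T]$ for all $k$ and, crucially, $t_{k+1}\ge \eta_- t_k$ in every case: in the branch $t_{k+1}=\min(\eta_+ t_k,T)$ this holds because $\eta_+>1>\eta_-$ and $t_k\le T$ (so the minimum is $\ge t_k>\eta_- t_k$), while in the branch $t_{k+1}=\max(\eta_- t_k,\tau)$ it is immediate.

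Next I would prove the one-step estimate. Fix $k$ and pick $\hat x^k\in\prox{t_kf}(x^k)$, which exists by Lemma~\ref{lemma: nonempty-prox}; then by definition of the proximal and envelope, $u(x^k,t_k)=f(\hat x^k)+\tfrac{1}{2t_k}\|x^k-\hat x^k\|^2$ exactly. Substituting $g^k=t_k^{-1}(x^k-\hat x^k)$ into Line~5 gives $x^{k+1}=x^k-\alpha(x^k-\hat x^k)$, hence $x^{k+1}-\hat x^k=(1-\alpha)(x^k-\hat x^k)$. Using $\hat x^k$ as a feasible (generally non-optimal) point in the definition of $u(x^{k+1},t_{k+1})$ and then $t_{k+1}\ge\eta_- t_k$,
\begin{equation*}
u(x^{k+1},t_{k+1})\ \le\ f(\hat x^k)+\frac{(1-\alpha)^2}{2t_{k+1}}\|x^k-\hat x^k\|^2\ \le\ f(\hat x^k)+\frac{(1-\alpha)^2}{2\eta_- t_k}\|x^k-\hat x^k\|^2 ,
\end{equation*}
and subtracting the identity for $u(x^k,t_k)$ yields $u(x^{k+1},t_{k+1})-u(x^k,t_k)\le \tfrac{1}{2t_k}\big((1-\alpha)^2/\eta_- -1\big)\|x^k-\hat x^k\|^2$.

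To conclude, Assumption~\ref{ass: alg-params}i says $|1-\alpha|<\sqrt{\eta_-}$, hence $c\triangleq 1-(1-\alpha)^2/\eta_->0$; combined with $t_k\le T$ this gives exactly~(\ref{eq: u-induction-sum-decrease}) with $\mu\triangleq c/T>0$. Monotone decrease of $\{u(x^k,t_k)\}$ is then immediate from that inequality, and since $u(x,t)=\min_z f(z)+\tfrac{1}{2t}\|z-x\|^2\ge\inf_z f(z)>-\infty$ (the infimum is finite because $f$ is bounded below, equivalently a global minimizer exists by Assumption~\ref{ass: gamma-minimizers}), the sequence is bounded below and therefore converges by the monotone convergence theorem.

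I expect the only genuinely delicate point to be the passage $t_{k+1}\ge\eta_- t_k$ together with the step-size window: when the adaptive rule \emph{decreases} the time step, the Moreau-envelope monotonicity of Lemma~\ref{lemma: uT-ut} runs the wrong way, so one cannot simply drop $t_{k+1}$ in favor of $t_k$; it is precisely the slack $(1-\alpha)^2<\eta_-$ afforded by Assumption~\ref{ass: alg-params}i that compensates for the factor $1/\eta_-$ lost here. Everything else is routine algebra.
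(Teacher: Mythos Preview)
Your proposal is correct and follows essentially the same route as the paper: both use $\hat x^k$ as a feasible point for $u(x^{k+1},t_{k+1})$, reduce via $t_{k+1}\ge\eta_- t_k$ (the paper phrases this as $\theta_k\triangleq t_{k+1}/t_k\ge\eta_-$), and exploit $(1-\alpha)^2<\eta_-$ from Assumption~\ref{ass: alg-params}i to obtain a strictly negative coefficient, then conclude via the lower bound $u\ge \inf f$. Your version is slightly more explicit---you verify $t_{k+1}\ge\eta_- t_k$ case by case and give the concrete value $\mu=\big(1-(1-\alpha)^2/\eta_-\big)/T$ where the paper only asserts existence of such $\mu$---but the argument is the same.
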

  \begin{proof}
     By Lemma \ref{lemma: nonempty-prox}, there is $\hat{x}^k\in\prox{t_kf}(x^k)$ for all $k\in\bbN$.
     Additionally, for all $k\in\bbN$,
    \begin{subequations}
    \begin{align} 
        u(x^{k+1}, t_{k+1})
        & \leq f(\hat{x}^k) + \dfrac{1}{2t_{k+1} }\|\hat{x}^k - x^{k+1}\|^2 \\ 
        & = f(\hat{x}^k) + \dfrac{1}{2t_{k+1} } \|\hat{x}^k - x^k + \alpha t_k  g^{k}\|^2 \\ 
        & = f(\hat{x}^k) + \dfrac{(1 - \alpha)^2 }{2t_{k+1} } \| x^k - \hat{x}^k\|^2 \\ 
        & =  u(x^k,t_k) + \dfrac{1}{2}\underbrace{\left( \dfrac{(1-\alpha)^2}{t_{k+1}} - \dfrac{1}{t_k}  \right)}_{\triangleq \xi_k}\|x^k-\hat{x}^k\|^2,
    \end{align}\label{eq: theorem-proof-01}\end{subequations}where $\xi_k$ is defined to be the underbraced quantity.
    The first inequality above follows from the definition of $u$ and the second equality holds by definition of the update formula for $\{x^k\}$. 
    We next show $\xi_k$ is bounded from above by a negative constant.
    Define the sequence $\{\theta_k\}\subset (0,\infty)$ by
    \begin{equation}
        \theta_k \triangleq \dfrac{t_{k+1}}{t_k}, \ \ \ \mbox{for all $k\in\bbN$.}
    \end{equation}
    Note  $\eta_-$ is a lower bound for $\{\theta_k\}$ and, by the choice of step size in Assumption \ref{ass: alg-params},  $(1-\alpha_k)^2  < \eta_-$. Thus, there is $\mu \in (0,1/T)$ such that $ (1-\alpha)^2 \leq (1-\mu T) \eta_-$ and
    \begin{equation}
        \theta_k \geq \eta_-\geq \dfrac{(1-\alpha)^2}{1-\mu T}
        \geq \dfrac{(1-\alpha)^2}{1-\mu t_k},
        \ \ \ \mbox{for all $k\in\bbN$,}
        \label{eq: theorem-proof-02}
    \end{equation}
    which implies
    \begin{equation}
        1 - \mu t_k \geq  \dfrac{(1-\alpha)^2}{\theta_k}  
        \ \  \Longrightarrow \ \  
        -\mu \geq  \dfrac{1}{t_k}\left( \dfrac{(1-\alpha)^2}{\theta_k} - 1 \right) 
        = \dfrac{(1-\alpha)^2}{t_{k+1}} - \dfrac{1}{t_k} ,
        \ \ \ \mbox{for all $k\in\bbN$.}
        \label{eq: theorem-proof-03}
    \end{equation}      
    Together (\ref{eq: theorem-proof-01}) and (\ref{eq: theorem-proof-03}) imply  
    \begin{equation}
           u(x^{k+1}, t_{k+1})
           \leq u(x^k,t_k) -  \dfrac{\mu }{2} \|x^k-\hat{x}^k\|^2, \ \ \ \mbox{for all $k\in\bbN$,}
          \label{eq: u-monotonic}
    \end{equation}    
    \ie $\{u(x^k,t_k)\}$ is monotonically decreasing.
    Then (\ref{eq: u-induction-sum-decrease}) follows from induction on (\ref{eq: u-monotonic}).
    Additionally, Assumption \ref{ass: alg-params} implies there is a global minimizer $x^\star$ of $f$. Whence 
    \begin{equation}
        -\infty <  f(x^\star) \leq \inf_{x} f(x) + \dfrac{1}{2t_k}\|x-x^k\|^2  = u(x^k, t_k),
        \ \ \ \mbox{for all $k\in\bbN$.}
        \label{eq: theorem-proof-04}
    \end{equation}
    By monotone convergence theorem,  the sequence $\{u(x^k,t_k)\}$ converges. 
  \end{proof}

\vspace*{0.25in}
\rev{We conclude this section below with our main contribution, a convergence theorem.}

\newpage

{\bf Theorem \ref{thm: nonconvex-convergence}.}    {(\sc Global Minimization)}. If Assumptions \ref{ass: f}, \ref{ass: gamma-minimizers}, and \ref{ass: alg-params} hold,
    then the iteration in Lines 4 to 7 of Algorithm \ref{alg: MAD} yields convergence to optimal objective values, \ie 
    \begin{equation}
        \lim_{k\rightarrow\infty} f(x^k) = \min_{x\in\bbR^n} f(x).
    \end{equation}
    Additionally, a subsequence $\{x^{n_k}\}\subseteq \{x^k\}$ converges to a  {global} minimizer of $f$.

\begin{proof}
     
    We first show there is a   subsequence $\{x^{n_k}\}\subseteq\{x^k\}$ with limit $x^\infty$ (Step 1).
    Then we show $\|x^k-\hat{x}^k\|\rightarrow 0$, where $\hat{x}^k\in \prox{t_kf}(x^k)$ (Step 2).
    This is used to verify $t_k \rightarrow T$ in finitely many steps (Step 3).
    These facts are together used to show $u(x^\infty,T)=f(x^\infty)$ (Step 4).
    This, in turn, is used to verify $x^\infty$ is a global minimizer of $f$ (Step 5) and $u(x^k,T)\rightarrow f(x^\infty)$ (Step 6). We conclude by showing convergence of the function values $\{f(x^k)\}$ to $f(x^\infty)$ (Step 7).

    {\bf Step 1.}    
     By Lemma \ref{lemma: u-monotonic-decrease}, $\{u(x^k,t_k)\}$ converges monotonically (\ie is decreasing) and there is $\mu > 0$  and a sequence $\{\hat{x}^k\}$ such that  (\ref{eq: u-induction-sum-decrease}) holds and $\hat{x}^k\in \prox{t_kf}(x^k)$ for all $k\in\bbN$.
     The monotonicity of $\{u(x^k,t_k)\}$ with Lemma \ref{lemma: xk-bounded} implies  $\{x^k\}$ is bounded.
    Thus, there is a convergent subsequence $\{x^{n_k}\}\subseteq \{x^k\}$ with limit $x^\infty$.

     {\bf Step 2.}     
    Due to Assumption \ref{ass: alg-params}ii,   there is a global minimizer $x^\star$ of $f$, which implies $f$ is bounded from below.  
    If
    \begin{equation}
        \sum_{n=1}^\infty \|x^n - \hat{x}^n\|^2 = +\infty,
        \label{eq: gradient-series}
    \end{equation}
    then (\ref{eq: u-induction-sum-decrease}) implies
    \begin{equation}         
         f(x^\star) = \min_{x\in\bbR^n} f(x) 
         \leq \lim_{k\rightarrow\infty} u(x^k, t_k)
         \leq \lim_{k\rightarrow\infty}   u (x^{1},t_1) -   \dfrac{\mu}{2}\sum_{n=1}^\infty \|x^n - \hat{x}^n\|^2
         = -\infty,
         \label{eq: theorem-proof-05}
    \end{equation}
    contradicting the fact $f$ is bounded from below.
    Thus, the series in (\ref{eq: gradient-series}) is finite.
    Because the sequence $\{\|x^n - \hat{x}^n\|\}$ is also nonnegative, it further follows that
    \begin{equation}
        \limk \|x^k - \hat{x}^k\| = 0.
        \label{eq: theorem-proof-06}
    \end{equation}

    {\bf Step 3.} Note $\{ T/t_k\} \subset [1, T/\tau]$ by the choice of time step rule in (\ref{eq: time-step}) and, by (\ref{eq: theorem-proof-06}),
    \begin{equation}
        0  
        = \limk \dfrac{T}{t_k} \|x^k - \hat{x}^k\|
        = T \limk \|g^k\|
        \ \ \ \implies \ \ \ 
        \limk \|g^k \| = 0,
    \end{equation}
    which implies
    \begin{equation}
        0 \leq \limk \dfrac{\|g^{k}\|}{\rev{\theta_1} \|g^{k-1}\|+\rev{\varepsilon}}
        \leq \limk \dfrac{\|g^{k}\|}{\rev{\varepsilon}}
        = 0.
    \end{equation}
    Consequently, by (\ref{eq: time-step}), there exists $N_1\in\bbN$ such that
    \begin{equation}
        t_{k+1} = \mbox{TimeStep}(t_k, g^k, g^{k-1})
        = \min(T, \eta_+ t_k), \ \ \ \mbox{for all $k \geq N_1$,}
    \end{equation}
    and so there exists $N_2 \geq N_1$ such that
    \begin{equation}
        t_k = T, \ \ \ \mbox{for all $k\geq N_2$.}
        \label{eq: theorem-proof-10}
    \end{equation}
    Thus, $t_k \rightarrow T$ in finitely many steps. \\

    {\bf Step 4.} 
    By (\ref{eq: theorem-proof-06}) and the fact $x^{n_k}\rightarrow x^\infty$,
    \begin{equation}
        0 \leq \limk \|\hat{x}^{n_k} - x^\infty\|
        \leq \limk {\|\hat{x}^{n_k} - x^{n_k}\| + \|x^{n_k} - x^\infty\|
        }
        = 0 + 0,
        \label{eq: theorem-proof-09}
    \end{equation}
    and so the squeeze lemma asserts $\hat{x}^{n_k} \rightarrow x^\infty$. 
    Fix any $\beta > 0$. 
    Then the convergence of $\{\hat{x}^{n_k}\}$ implies there is $N_3\in\bbN$ such that
    \begin{equation}
        |f(x^\infty) - f(\hat{x}^{n_k})| \leq \beta,
        \ \ \ \mbox{for all $k\geq N_3$.}
        \label{eq: theorem-proof-07}
    \end{equation}
    By Lemma \ref{lemma: nonempty-prox}, there is $\hat{x}^\infty \in \prox{tf}(x^\infty)$. So, the convergence of $\{x^{n_k}\}$ and continuity of scalar products also implies there is $N_4\in\bbN$ such that
    \begin{equation}
        \left|  \|x^{n_k}\|^2 - \|x^\infty\|^2 - 2\left<\hat{x}^\infty, x^{n_k}-x^\infty\right>\right|
        \leq 2T\beta,
        \ \ \ \mbox{for all $k\geq N_4$.}
        \label{eq: theorem-proof-08}
    \end{equation}
    Together (\ref{eq: theorem-proof-10}), (\ref{eq: theorem-proof-07}), (\ref{eq: theorem-proof-08}) and the definition of $u$ imply, for all $k \geq \max(N_2, N_3,N_4)$,
    \begin{subequations}
    \begin{align}
        f(x^\infty)
        & \leq f(\hat{x}^{n_k}) + \beta \\
        & \leq u(x^{n_k},t_{n_k}) + \beta\\
        & = u(x^{n_k}, T) + \beta\\
        & \leq f(\hat{x}^\infty) + \dfrac{1}{2T}\|\hat{x}^\infty - x^{n_k}\|^2  + \beta \\
        & =  u(x^\infty,T) + \dfrac{1}{2T}\left[ \|x^{n_k}\|^2 - \|x^\infty\|^2 - 2\left<\hat{x}^\infty, x^{n_k}-x^\infty\right>\right] + \beta\\ 
        & \leq u(x^\infty, T) + 2\beta.
    \end{align}
    \end{subequations}
    Hence
    \begin{equation}
        u(x^\infty, T)
        \leq f(x^\infty)
        \leq u(x^\infty, T) + 2\beta.
    \end{equation}
    Since this holds for arbitrary $\beta > 0$, we may let $\beta\rightarrow 0^+$ to deduce $f(x^\infty) = u(x^\infty,T)$.

    {\bf Step 5.}
    Using the convergence of $\{\hat{x}^{n_k}\}$ and monotonicity of $\{u(x^k,t_k)\}$, 
    \begin{align}    
        f(x^\infty) 
        = \limk f(\hat{x}^{n_k})
        \leq \limk u(x^{n_k}, t_k)
        \leq u(x^1, t_1).
        \label{eq: theorem-proof-30}
    \end{align}
    With Assumption \ref{ass: alg-params}ii, (\ref{eq: theorem-proof-30}) reveals
    \begin{equation}
        f(x^\infty) \leq u(x^1,t_1)
        \leq f(x^\star) + \dfrac{1}{2\rev{t_1}}\|x^\star-x^1\|^2
        \leq f(x^\star ) + \gamma.
    \end{equation}
    Additionally, by Lemma \ref{lemma: u=f} \rev{and Step 4}, $0\in \partial f(x^\infty)$.
    These \rev{last} two \rev{results} together with Assumption \ref{ass: gamma-minimizers}ii imply $x^\infty$ is a global minimizer of $f$.
    
    {\bf Step 6.} Next we show $u(x^k,T)\rightarrow f(x^\infty)$. Since 
    \begin{equation}
        f(x^\infty) 
        = \limk f(\hat{x}^{n_k})
        \leq \limk u(x^{n_k}, t_{n_k})
        \leq  \limk f(x^{n_k})
        = f(x^\infty),
    \end{equation}
    $u(x^{n_k},t_k)\rightarrow f(x^\infty)$. As $t_k \rightarrow T$ in finitely many steps, $u(x^{n_k}, T)\rightarrow f(x^\infty)$.
    Because $\{u(x^{n_k},T)\}\subseteq \{u(x^k,T)\}$ and $\{u(x^k, T)\}$ converges,\footnote{This holds since $\{u(x^k,t_k)\}$ converges and $u(x^k,t_k) = u(x^k, T)$ for $k\geq N_2$.} the unique limit of the entire sequence must coincide with the limits of subsequences, \ie $u(x^k,T)\rightarrow f(x^\infty)$.

    {\bf Step 7.} Let $\varepsilon > 0$ be given.
    It suffices to show there is $N\in\bbN$ such that
    \begin{equation}
        f(x^k) \leq f(x^\infty)+\varepsilon,
        \ \ \ \mbox{for all $k\geq N$.}
        \label{eq: theorem-proof-12}
    \end{equation}
    {Since $u(x^k,T)\rightarrow f(x^\infty)$,} there is $N_5\in\bbN$ such that
    \begin{equation}
        f(\hat{x}^k)
        \leq u(x^k,T)
        \leq f(x^\infty) + \min\left(\gamma,\frac{\varepsilon}{2}\right),
        \ \ \ \mbox{for all $k\geq N_5$.}
        \label{eq: theorem-proof-31}
    \end{equation}
    By   the continuity of $f$ (\ie Assumption \ref{ass: f}) and Assumption \ref{ass: gamma-minimizers}i,    $f$ is uniformly continuous\footnote{The set $\sS_\gamma$ is compact and, given the continuity of $d(z,\sS_\gamma) \triangleq \inf\{ \|z-x\| : x \in \sS_\gamma\}$ and boundedness of $\sS_\gamma$, the set $\sC_\gamma$ is also compact.} on the set $\sC_\gamma \triangleq \{ z : d(z, \sS_\gamma) \leq 1\}$. Thus, there is $\delta > 0$ such that, for all $x,y\in\sC_\gamma$,
    \begin{equation}
        \|x-y\|\leq \delta 
        \ \ \ \implies \ \ \ 
        |f(x) - f(y)|\leq \dfrac{\varepsilon}{2}.
        \label{eq: theorem-proof-13}
    \end{equation}
    Since $\|x^k - \hat{x}^{k}\|\rightarrow0$, there is $N_6\in\bbN$ such that
    \begin{equation}
        \|x^k - \hat{x}^k\| \leq \min\left(\delta, 1\right),
        \ \ \ \mbox{for all $k\geq N_6$.}
        \label{eq: theorem-proof-11}
    \end{equation}
    So, (\ref{eq: theorem-proof-31}) implies $\hat{x}^k \in \sS_\gamma\subset \sC_\gamma$ for all $k\geq N_5$, and (\ref{eq: theorem-proof-11}) implies $x^{n_k}\in\sC_\gamma$ for all $k\geq \max(N_5,N_6)$. Additionally, (\ref{eq: theorem-proof-31}), (\ref{eq: theorem-proof-13}), and (\ref{eq: theorem-proof-11}) together yield
    \begin{subequations}
    \begin{align}
        f(x^k)
        & \leq f(\hat{x}^k) + \dfrac{\varepsilon}{2}\\
        & \leq f(\hat{x}^k) + \dfrac{1}{2T}\|\hat{x}^k - x^k\|^2 + \dfrac{\varepsilon}{2}\\
        &= u(x^k,T) + \dfrac{\varepsilon}{2}\\
        & \leq f(x^\infty) + \varepsilon,
        \ \ \ \mbox{for all $k\geq \max(N_5,N_6)$,}
    \end{align}
    \end{subequations}
    which verifies (\ref{eq: theorem-proof-12}), taking $N=\max(N_5,N_6)$.
\end{proof}   

\newpage
\section{Derivation of Gradient Formula as Expectation}
\label{app: gradient_derivation}
Recall from~\eqref{eq: u-viscous-gradient} that the gradient of the viscous HJ solution is
\begin{align}
    \nabla u^\delta(x,t)
    = -\delta \cdot \nabla\left[   \ln\left(v^\delta(x,t)\right)\right] 
    = -\delta \cdot \dfrac{\nabla v^\delta(x,t)}{v^\delta(x,t)},
\end{align}
where the heat equation solution $v^\delta$ can be rewritten in the form
\begin{align}
    v^\delta(x,t) 
    &= \Big( \Phi_{\rev{\delta} t} * \exp(-f/\delta)\Big)(x)
    \\
    &= (2 \pi \rev{\delta} t)^{-n/2} \int_{\bbR^n} \exp\left( \frac{-f(y)}{\delta} \right) \exp\left(\frac{-(x-y)^2}{ \rev{2\delta} t}\right) \mbox{d}y \label{eq: v_delta_integral_form}
    \\
    &= \bbE_{y\sim  \bbP_{x, \rev{\delta} t}}\left[ \exp\left(- \frac{f(y)}{\delta}\right) \right].
\end{align}

Here, we have re-written the integral as an expectation, noting \eqref{eq: v_delta_integral_form} contains the heat kernel, which could be re-written as a Gaussian density with mean $x$ and standard deviation $\sqrt{\delta t}$. 
Differentiating $v^\delta$ with respect to $x$, we obtain
\begin{align}
    \nabla v^\delta(x,t) 
    &= (2 \pi \rev{\delta} t)^{-n/2} \int_{\bbR^n} \frac{x-y}{\rev{\delta} t} \exp\left(\frac{-(x-y)^2}{2\rev{\delta} t}\right) \exp\left( \frac{-f(y)}{\delta} \right) dy
    \\
    &= \rev{-\frac{1}{\delta t}} \cdot \bbE_{y\sim  \bbP_{x, \rev{\delta} t}}\left[(x-y) \exp\left(-\delta^{-1}{f}(y)\right) \right].
\end{align}
Plugging these definitions of $v^\delta$ and $\nabla v^\delta$ in~\eqref{eq: u-viscous-gradient}, we obtain the desired formula in~\eqref{eq: u-viscous-gradient-expectation}.

\section{Pointwise Convergence  to Moreau Envelope}
\label{app: taking_delta_to_zero}

\rev{Prior work has already established the uniform convergence $u^\delta \rightarrow u$ as $\delta \rightarrow 0^+$. Below we expand the convolution definition and rewrite $u^\delta$ using an $L^p$ norm. Fixing $x\in\bbR^n$ and $t > 0$ and defining
\begin{equation}
    q_t(y) \triangleq f(y) + \dfrac{1}{2t}\|y-x\|^2,
\end{equation}
note
\begin{subequations}
\begin{align}
    u^\delta(x,t)
    & = -\delta\ln\left( (2 \pi \rev{\delta} t)^{-n/2} \int_{\bbR^n} \exp\left( \frac{-f(y)}{\delta} \right) \exp\left(\frac{-\|x-y\|^2}{ \rev{2\delta} t}\right) \mbox{d}y\right) \\
    & = \dfrac{\delta n}{2}\ln(2\pi \delta t) - \ln\left( \left[ \int_{\bbR^n} \exp\left( -\dfrac{2tf(y)+\|x-y\|^2}{2\delta t}\right)\ \mbox{d}y\right]^{\delta}\right) \\
    & =  \dfrac{\delta n}{2}\ln(2\pi \delta t) - \ln\left( \left[ \int_{\bbR^n} \exp\left( -\left[f(y) + \dfrac{1}{2t}\|x-y\|^2\right]\right)^{1/\delta}\ \mbox{d}y\right]^{\delta}\right) \\
    & = \dfrac{\delta n}{2}\ln(2\pi \delta t) - \ln\left( \left[ \int_{\bbR^n} \exp\left( -q(y)\right)^{1/\delta}\ \mbox{d}y\right]^{\delta}\right) \\
     & = \dfrac{\delta n}{2}\ln(2\pi \delta t) - \ln\left( \|\exp(-q_t)\|_{L^{1/\delta}(\bbR^n)}\right).
\end{align}
\end{subequations}
As $\delta\rightarrow 0^+$, the first term vanishes via L'Hôpital's rule and the $L^p$ norm becomes an $L^\infty$ norm, \ie 
\begin{subequations} 
    \begin{align} 
    u^{0^+}(x,t)
    & = 0 - \ln \left(\left\|\exp\left( - q_t\right)\right\|_{L^{\infty}(\bbR^n)} \right)\\
    & = 0 - \ln \left( \sup_{y\in \bbR^n}\exp\left( - q_t(y)\right)  \right)\\
    & = 0 - \ln \left(  \exp\left( - \inf_{y\in\bbR^n} q_t(y)\right)  \right)\\
    & = \inf_{y\in\bbR^n} f(y) + \dfrac{\|x-y\|^2}{2t},
    \end{align}
\end{subequations}
which is the desired limit. This \textit{informal} argument gives intuition for why $u^\delta$ can aptly estimate $u$.}
\end{document}